\newtheorem{lem}{Lemma}
\newtheorem{lemma}[lem]{Lemma}
\newtheorem{thm}{Theorem}
\newtheorem{theorem}[thm]{Theorem}
\newtheorem{cor}{Corollary}
\newtheorem{corollary}[cor]{Corollary}
\def\\{\cr}
\def\({\left(}
\def\){\right)}
\def\[{\left[}
\def\]{\right]}
\def\<{\langle}
\def\>{\rangle}
\def\Z{{\mathbb Z}}
\def\Q{{\mathbb Q}}
\def\C{{\mathbb C}}
\begin{document}

\title{Carmichael numbers in the sequence $\{2^n k+1\}_{n\ge 1}$}
\author{Javier Cilleruelo}
\address{Javier Cilleruelo, Instituto de Ciencias Matem\'{a}ticas (CSIC-UAM-UC3M-UCM) and
Departamento de Matem\'{a}ticas,
Universidad Aut\'onoma de Madrid,
28049, Madrid, Espa\~na
}\email{franciscojavier.cilleruelo@uam.es}
\author{ Florian Luca}
\address{Florian Luca, Fundaci\'on Marcos Moshinsky, UNAM,
Circuito Exterior, C.U., Apdo. Postal 70-543,
Mexico D.F. 04510, Mexico}\email{fluca@matmor.unam.mx}
\author{Amalia Pizarro}
\address{Amalia Pizarro, Departamento de Matem\'aticas,
Universidad de Valparaiso, Chile}\email{amalia.pizarro@uv.cl}

\begin{abstract}
We prove that for each odd number $k$, the sequence $\{k2^n+1\}_{n\ge 1}$ contains  only a finite number of  Carmichael numbers. We also prove that $k=27$ is the smallest value for which such a sequence contains some Carmichael number.
\end{abstract}
\date{\today}

\pagenumbering{arabic}

\maketitle

\section{Introduction}  The study of the presence of the prime numbers in the sequences of the form \linebreak $\{2^nk+1\}_{n\ge 1}$ is an old and difficult problem. While it is known that there exists infinitely many values of $k$, called Sierpinski numbers, for which the sequence does not contain any prime number, it is believed that for other values of $k$ the sequence $\{2^nk+1\}_{n\ge 1}$ contains, indeed, infinitely many of them.

Being unable to make any progress  on this problem, we have been successful to prove that each sequence of the above form contains only a finite number of  Carmichael numbers, which are, in a certain sense, the composite numbers most similar to the prime numbers.

A Carmichael number is a positive integer $N$ which is composite and
the congruence $a^N\equiv a\pmod N$ holds for all integers $a$, as happens when $N$ is a prime number. The
smallest Carmichael number is $N=561$ and was found by Carmichael in
1910 in \cite{Car}. It is well--known that there are infinitely many
Carmichael numbers (see \cite{AGP}), and it is believed that they are quite dense, namely that there are more than $x^{1-\epsilon}$ of them less than $x$ for every fixed $\epsilon>0$ once $x$ is large enough. Here, we let $k$ be any odd
positive integer and study the presence of Carmichael numbers in the
sequence of general term $2^n k+1$. It is known \cite{Wr}, that
the sequence $2^n+1$ does not contain Carmichael numbers, so we will
assume that $k\ge 3$ through the paper.
 We have the following result.

 For a positive
integer $m$ let $\tau(m)$ be the number of positive divisors of $m$.
We also write $\omega(m)$ for the number of distinct prime factors
of $m$. For a positive real number $x$ we write $\log x$ for its
natural logarithm.
\begin{theorem}
\label{thm:1} Let $k\ge 3$ be an odd integer. If $N=2^n k+1$ is
Carmichael,  then
\begin{equation}
\label{eq:1} n<2^{2\times 10^7 \tau(k)^2 (\log k)^2 \omega(k)}.
\end{equation}
\end{theorem}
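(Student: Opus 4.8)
The plan is to use Korselt's criterion to constrain the prime factorisation of $N$, to reduce the estimate \eqref{eq:1} to an absolute bound on the largest $2$-adic level occurring among the prime factors of $N$, and to obtain that bound by a descent.

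By Korselt's criterion, $N$ is squarefree and $p-1\mid N-1=2^n k$ for every prime $p\mid N$; since $k$ is odd, every prime factor of $N$ has the form $p=2^a d+1$ with $d\mid k$ and, as $p\le N$, with $1\le a\le n$. Write $N=p_1\cdots p_s$, $p_i=2^{a_i}d_i+1$, $d_i\mid k$, and $s=\omega(N)\ge 3$ (a Carmichael number has at least three prime factors). For each fixed $d\mid k$ the exponents $a$ with $2^a d+1\in\{p_1,\dots,p_s\}$ are pairwise distinct, so, putting $A:=\max_i a_i$, we get $s\le A\,\tau(k)$.

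Next I reduce \eqref{eq:1} to a bound on $A$. Since $p_i=2^{a_i}d_i+1<2^{A+1}k$ for all $i$, we have $2^n<N<(2^{A+1}k)^s\le (2^{A+1}k)^{A\tau(k)}$, hence
\begin{equation}
\label{eq:nbound}
n<A\,\tau(k)\,(A+1+\log_2 k),
\end{equation}
so it suffices to prove $A<2^{c\,\tau(k)^2(\log k)^2\omega(k)}$ for a suitable absolute constant $c$ (for instance $c=9\times 10^6$). A couple of degenerate configurations are disposed of immediately by size considerations. Reducing the identity $N=\prod_i(1+2^{a_i}d_i)$ modulo $2^{b+1}$, where $b:=\min_i a_i$, gives $N\equiv 1+2^b\sum_{a_i=b}d_i\pmod{2^{b+1}}$; so if an odd number of the $p_i$ attain the level $b$, then $v_2(N-1)=b$, i.e.\ $n=b$, whence every $p_i\ge 2^n+1$, so $N>2^{3n}$, which together with $N<2^{n+1}k$ forces $n<\tfrac12(1+\log_2 k)$. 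Likewise, if $A=n$ and $P=2^n d+1\mid N$, then $N/P\equiv 1\pmod{2^n}$ and $N/P>1$, so $N/P\ge 2^n+1$, while $N/P<N/2^n<k+1$, again giving $n<\log_2 k$. Hence we may assume $A<n$ and that the extreme levels are attained by an even number of primes; this is the case where \eqref{eq:nbound} is the working inequality.

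The main work, and the step I expect to be the principal obstacle, is the bound on $A$. Fix a prime $P=2^A d+1\mid N$. From $2^n k\equiv -1\equiv 2^A d\pmod P$ we obtain $2^{\,n-A}\equiv d\,k^{-1}\pmod P$, so $dk^{-1}$ lies in the cyclic subgroup $\langle 2\rangle$ of $(\Z/P\Z)^\times$, whose order $t$ divides $P-1=2^A d$ and also divides $\lambda(N)\mid 2^n k$, with $\lambda$ the Carmichael function. The plan is to run a descent on $N$: splitting $N=P\cdot(N/P)$ and weighing the $2$-adic valuation of $N/P-1$ against the divisibility $t\mid 2^A d$ should either produce a relation pinning $A$ down to size $O(\log k)$, or let us replace $N$ by a proper divisor of the same shape in which strictly fewer of the candidate primes $2^a d+1$ ($d\mid k$, $a\le n$) — and in particular strictly fewer distinct odd divisors of $k$ or $2$-power levels — remain in play. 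After at most $O(\tau(k)\log k\cdot\omega(k))$ steps the pool of candidates is exhausted, and propagating the (at worst exponential) growth of the running bound through these steps yields $A<2^{c\,\tau(k)^2(\log k)^2\omega(k)}$; then \eqref{eq:nbound} gives \eqref{eq:1}. The delicate points, where the numerical constants are spent, are ensuring that the descent does not branch — that each step genuinely removes a prime, a level, or a divisor of $k$ — and the bookkeeping of the $2$-adic valuations of the partial products $\prod_{i\in S}(1+2^{a_i}d_i)$, which is what introduces the factor $(\log k)^2$ and the tower shape of the final bound.
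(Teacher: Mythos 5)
Your opening reductions are fine as far as they go: Korselt's criterion does force $N=\prod_i(2^{a_i}d_i+1)$ with $d_i\mid k$, the parity argument on the minimal level and the case $A=n$ are correctly handled, and the inequality $n<A\,\tau(k)\,(A+1+\log_2 k)$ is valid. But the reduction to ``bound $A$'' is not really a reduction. The paper's Lemma~\ref{lem:3} shows that $N$ can have a prime factor $p=2^m d+1$ with $m$ as large as about $n/3$ (when $2^m d$ and $2^n k$ are multiplicatively dependent), so proving $A<2^{c\,\tau(k)^2(\log k)^2\omega(k)}$ is essentially equivalent to proving the theorem itself; you have restated the problem, not simplified it.

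The fatal gap is the ``descent'' that is supposed to bound $A$: as written it is a plan, not an argument, and I do not believe it can be made to work. The quantity $N/P$ is neither Carmichael nor of the form $2^{n'}k'+1$, so there is no ``same shape'' to recurse on; and the pool of candidate primes $2^a d+1$ with $d\mid k$, $a\le n$ has size up to $n\,\tau(k)$, which is not bounded in terms of $k$, so removing ``a prime, a level, or a divisor of $k$'' per step gives no termination in $O(\tau(k)\log k\cdot\omega(k))$ steps. The genuine difficulty — which elementary $2$-adic bookkeeping does not touch — is to show that the number of pairs $(m,d)$ with $2^m d+1$ prime and dividing $2^nk+1$ is bounded in terms of $k$ alone, uniformly in $n$. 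The paper does this by first showing (Lemmas~\ref{lem:2}--\ref{lem:4}, via a pigeonhole argument on the lattice of exponents) that the bulk of $N$ comes from primes with $\delta_0\sqrt{n}<m<7\sqrt{n\log k}$, so that there are at least $\sqrt{n}/(24\sqrt{\log k})$ of them, and then bounding the number of such primes by $2^{3\times 121^3\tau(k)^2(\log k)^2\omega(k)}$ using Evertse's quantitative Subspace Theorem, Pontreau's bound for $\mathcal S$-unit points on curves, and Matveev's lower bound for linear forms in logarithms. These deep inputs are doing the real work; no purely combinatorial descent of the kind you sketch is known to replace them, and your proposal supplies no mechanism that would.
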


The proof of Theorem \ref{thm:1}, which is our main result, combines combinatorial arguments with two deep tools: a quantitative version of the
Subspace Theorem as well as lower bounds for linear forms in
logarithms of algebraic numbers.

Besides $k=1$ there are other values of $k$ for which the
sequence $2^nk+1$ does not contain any Carmichael numbers. Indeed, in
\cite{BFLS}, it has been shown, among other things, that if we put
$$
{\mathcal K}=\{k:\ (2^nk+1)_{n\ge 0} \text{ contains some Carmichael
number}\},
$$
then ${\mathcal K}$ is of asymptotic density zero. This contrasts with the known fact, proved by Erd\H os and Odlyzko \cite{EO}, that the set
$$
\{k:\  (2^nk+1)_{n\ge 0} \text{ contains some prime number}\}
$$
is of positive lower density. Since $1729=2^6\times
27+1$ is a Carmichael number, we have that $27\in {\mathcal K}$.
While Theorem \ref{thm:1} gives us an upper bound on the largest
possible $n$ such that $2^n k+1$ is Carmichael, it is not useful in
practice to check if a given $k$ belongs to $\mathcal K$. For the sake of the completeness, we
prove by elementary means the following result.

\begin{theorem}
\label{thm:2}
The smallest element of ${\mathcal K}$ is $27$.
\end{theorem}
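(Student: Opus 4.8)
\noindent\textit{Plan for the proof of Theorem \ref{thm:2}.}
The first step is a reduction to a finite, and odd, family of values of $k$. If $k=1$ the claim that $1\notin{\mathcal K}$ is exactly the result of \cite{Wr}. If $k=2^a m$ with $a\ge 1$ and $m$ odd, then $(2^nk+1)_{n\ge 0}=(2^{n}m+1)_{n\ge a}$ is a subsequence of $(2^nm+1)_{n\ge 0}$, so $k\in{\mathcal K}$ forces $m\in{\mathcal K}$ with $m<k$; hence the least element of ${\mathcal K}$ is odd. Since $1729=2^6\times 27+1$ is Carmichael, $27\in{\mathcal K}$, and it remains to prove that $k\notin{\mathcal K}$ for each odd $k$ with $3\le k\le 25$, i.e.\ for the twelve values $k\in\{3,5,7,9,11,13,15,17,19,21,23,25\}$.

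The second step is to rewrite the Carmichael condition by means of Korselt's criterion. A composite $N$ is Carmichael if and only if it is squarefree and $p-1\mid N-1$ for every prime $p\mid N$. For $N=2^nk+1$ this means that every prime $p\mid N$ satisfies $p-1\mid 2^nk$; equivalently, writing $p-1=2^{e}d$ with $d$ odd, one has $e\le n$ and $d\mid k$. Thus a hypothetical Carmichael $N=2^nk+1$ is a squarefree product of at least three primes drawn from the explicit family $\{2^{e}d+1:\ d\mid k,\ e\ge 1\}$, and for each such prime factor $p=2^{e}d+1$ the divisibility $p\mid N$ is equivalent to $2^{\,n-e}(k/d)\equiv 1\pmod p$, which (when solvable at all) confines $n$ to a single residue class modulo $\operatorname{ord}_p(2)$, a divisor of $2^{e}d$.

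The third and final step is, for each of the twelve values of $k$, to run the resulting finite case analysis and derive a contradiction. Concretely, one lists the admissible primes $p=2^{e}d+1$ ($d\mid k$) together with the residue class modulo $\operatorname{ord}_p(2)$ that $p\mid N$ imposes on $n$ (some of these classes are empty because $2^{\,n-e}(k/d)\equiv 1\pmod p$ has no solution), and one verifies that the constraints coming from three such prime factors can never be met simultaneously: either the required residue classes for $n$ are pairwise incompatible, or for the surviving residues of $n$ the number $2^nk+1$ is forced to be prime, or to be divisible by the square of a small admissible prime, or to acquire a prime factor $p$ with $v_2(p-1)>n$ — in each case $2^nk+1$ fails to be a Carmichael number with three admissible prime factors. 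The finitely many small values of $n$ left over by these congruence coverings are in addition checked by hand. The main obstacle is precisely this third step: it is entirely elementary, but it requires carefully organizing, for each $k$ separately, a covering of the integers by residue classes of $n$ on which $2^nk+1$ provably fails the Korselt test (and handling the residues on which the only escape would be a single large third prime factor); the underlying order and inverse computations modulo the relevant small primes are routine.
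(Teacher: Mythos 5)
Your first two steps (reduction to odd $3\le k\le 25$ via $2^a m\mapsto m$, and the reformulation of Korselt's criterion as: every prime factor of $N=2^nk+1$ has the form $2^{e}d+1$ with $d\mid k$, each such factor pinning $n$ to a single residue class modulo $\operatorname{ord}_p(2)$) are correct and match the paper's setup. The problem is your third step, which is the entire content of the theorem: it is not a proof but a restatement of the problem, and the procedure you describe for carrying it out does not terminate. For fixed $k$ the family $\{2^{e}d+1:\ d\mid k,\ e\ge 1\}$ of admissible primes is (as far as anyone knows) infinite, so one cannot ``list the admissible primes together with the residue class each imposes on $n$'' and then check that no three constraints are simultaneously satisfiable: for every large $e$ there are new candidate primes whose constraints are single residue classes to ever-larger moduli, and these never assemble into a finite covering of the integers $n$. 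Moreover, one of your listed failure modes --- that on certain residue classes ``$2^nk+1$ is forced to be prime'' --- is not something that can be established for infinitely many $n$.

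The paper closes the argument by two mechanisms absent from your outline. For prime $k\le 23$ the only admissible odd parts are $d=1$ and $d=k$; the Fermat factors are bounded by Lemma \ref{lem:2}, and the paper then invokes Wright's Main Theorem 2 of \cite{Wr}, which classifies the finitely many Carmichael numbers of the relevant shape --- an external input your plan does not replace. For $k\in\{9,15,21,25\}$ the paper first checks $n\le 256$ by computation, proves $n>a+20$ where $a=\min_i m_i$, and then runs a bootstrapping argument: since $\nu_2\bigl(p_1\cdots p_i-1\bigr)=b_i<n$ while $N\equiv 1\pmod{2^n}$, the remaining prime factors must include one with exponent at most $b_i$; combined with congruence conditions on the admissible exponents obtained from quadratic reciprocity (which rule out some divisors $d$ entirely and restrict the exponents of the others modulo $12$), this forces an explicit ascending chain of new prime factors $2^{a+j}d+1$ whose exponents are pinned down step by step, until one of the forced ``primes'' is divisible by $13$ or $5$, a contradiction. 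Neither this cascading mechanism nor the reciprocity constraints that make it terminate appear in your proposal, so as written it does not yield a proof.
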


For the proofs of Theorems \ref{thm:1} and \ref{thm:2}, we start with
some elementary preliminary considerations concerning prime factors
of Carmichael numbers of the form $2^n k+1$, namely Lemmas
\ref{lem:1}, \ref{lem:2}, \ref{lem:3} and \ref{lem:4}. Then we move
on to the proofs of Theorem \ref{thm:1} and \ref{thm:2}.

\section{Preliminary considerations}

Here we collect some results about prime factors of Carmichael
numbers of the form $2^n k+1$.  There is no lack of generality in
assuming that $k$ is odd. We start by recalling Korselt's criterion.

\begin{lemma}
\label{lem:1}
$N$ is Carmichael if and only if $N$ is composite, squarefree and $p-1\mid N-1$ for all prime factors $p$ of $N$.
\end{lemma}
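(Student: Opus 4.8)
The plan is to establish both implications directly from the definitions, using only the Chinese Remainder Theorem, Fermat's little theorem, and the existence of a primitive root modulo a prime.

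For the direction ($\Leftarrow$), suppose $N$ is composite, squarefree, and $p-1\mid N-1$ for every prime $p\mid N$. I would fix an arbitrary integer $a$ and prove $a^N\equiv a\pmod p$ for each prime $p\mid N$; since $N$ is squarefree, the Chinese Remainder Theorem then upgrades this to $a^N\equiv a\pmod N$, which is exactly the Carmichael condition. If $p\mid a$, both sides vanish modulo $p$. Otherwise Fermat's little theorem gives $a^{p-1}\equiv 1\pmod p$, and raising to the power $(N-1)/(p-1)$ (an integer, by hypothesis) yields $a^{N-1}\equiv 1\pmod p$, hence $a^N\equiv a\pmod p$.

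For the converse ($\Rightarrow$), assume $N$ is Carmichael. First I would check squarefreeness: if $p^2\mid N$ for some prime $p$, then specializing $a=p$ in $a^N\equiv a\pmod N$ forces $p^N\equiv p\pmod{p^2}$, i.e. $p\mid p^{N-1}-1$; but $N-1\ge 1$ (since $N\ge 4$ is composite) gives $p\mid p^{N-1}$, a contradiction. So $N$ is squarefree. Next, for each prime $p\mid N$ choose a primitive root $g$ modulo $p$. Reducing $g^N\equiv g\pmod N$ modulo $p$ and using $p\nmid g$ gives $g^{N-1}\equiv 1\pmod p$; since $g$ has multiplicative order exactly $p-1$ modulo $p$, this forces $p-1\mid N-1$, as desired.

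I do not anticipate a genuine obstacle, as this is the classical criterion of Korselt; the only points requiring a little care are that the reduction in the ($\Leftarrow$) direction really does use squarefreeness, so that a congruence modulo $N$ can be tested prime by prime, and that the case $p\mid a$ must be treated separately since Fermat's little theorem does not apply there.
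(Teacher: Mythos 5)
Your proof is correct and is the standard argument for Korselt's criterion; the paper simply recalls this classical lemma without proof, so there is nothing to compare against. Both directions are handled properly, including the two points you flag (the case $p\mid a$ in Fermat's little theorem, and the use of squarefreeness to reduce a congruence modulo $N$ to congruences modulo its prime factors), and the squarefreeness check via $a=p$ and the use of a primitive root to force $p-1\mid N-1$ are exactly the expected steps.
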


Assume now that  $k$ is fixed and $N=2^n k+1$ is a Carmichael number for some $n$. By Lemma \ref{lem:1}, it follows that
\begin{equation}
\label{eq:main}
2^n k+1=\prod_{i=1}^s (2^{m_i} d_i+1),
\end{equation}
where $s\ge 2$, $1\le m_i\le n$  and $d_i$ are divisors of $k$ such that $p_i=2^{m_i} d_i+1$ is prime for $i=1,\ldots,s$.
The prime factors $p=2^m d+1$ of $N$ for which $d=1$ are called
Fermat primes. For them, we must have $m=2^{\alpha}$ for some
integer $\alpha\ge 0$. The next result shows that one can bound the
Fermat prime factors of $2^n k+1$ in terms of $k$.

\begin{lemma}
\label{lem:2}
If $k\ge 3$ is odd and $p=2^{2^{\alpha}}+1$ is a prime factor of the positive integer $N=2^n k+1$, then $p<k^2$.
\end{lemma}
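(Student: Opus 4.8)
The plan is to extract from the equality $p=2^{2^{\alpha}}+1$ the congruence $2^{2^{\alpha}}\equiv-1\pmod p$, which shows that $2$ has multiplicative order exactly $2^{\alpha+1}$ modulo $p$; hence the cyclic subgroup generated by $2$ in $(\Z/p\Z)^{\times}$ is precisely the set of residue classes $\pm2^{t}$ with $0\le t<2^{\alpha}$, each occurring exactly once. This structural remark is the engine of the whole argument.

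To begin, I would rewrite the hypothesis $p\mid N=2^{n}k+1$ as $k\equiv-2^{-n}\pmod p$ (note $p\nmid k$, since $p\mid N$). Reducing the exponent $-n$ modulo $2^{\alpha+1}$ and using $2^{2^{\alpha}}\equiv-1$, the class $-2^{-n}$ takes the canonical form $\varepsilon\,2^{t}$ with $\varepsilon\in\{1,-1\}$ and $0\le t<2^{\alpha}$; any such $t$ satisfies $2^{t}\le 2^{2^{\alpha}-1}=\tfrac{p-1}{2}<p$. Thus $p$ divides $k-2^{t}$ (if $\varepsilon=1$) or $k+2^{t}$ (if $\varepsilon=-1$) for some $t$ in this range.

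The two cases are then dispatched by elementary size estimates. If $p\mid k+2^{t}$, then $k+2^{t}$ is a positive multiple of $p$, so $p\le k+2^{t}\le k+\tfrac{p-1}{2}$, which rearranges to $p\le 2k-1$; and $2k-1<k^{2}$ because $(k-1)^{2}>0$. If instead $p\mid k-2^{t}$, I first observe that $k-2^{t}\ne0$ (for $t\ge1$ since $k$ is odd and $2^{t}$ even, for $t=0$ since $k\ge3$), and I rule out $k<2^{t}$, since otherwise $2^{t}-k$ would be a positive multiple of $p$ that is smaller than $2^{t}<p$, which is impossible. Hence $k>2^{t}$, so $k-2^{t}\ge p$ and therefore $k>p\ge3$, which yields $p<k^{2}$ (in fact $p<k$).

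I do not anticipate a real obstacle here: the argument is entirely elementary, and it actually produces the sharper bounds $p<k$ or $p\le2k-1$, the weaker form $p<k^{2}$ being all that the subsequent arguments need. The only points that call for a little care are the bookkeeping with the order of $2$ and the passage to the canonical form $\pm2^{t}$, together with a quick check of the degenerate case $\alpha=0$, where $p=3$ and the inequality $p<k^{2}$ is immediate from $k\ge3$ (and is in any event covered by the general argument above).
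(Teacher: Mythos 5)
Your proof is correct, and it actually establishes more than the lemma claims. The paper's own argument also reduces $N$ modulo $p$ via $2^{2^{\alpha}}\equiv-1\pmod p$, but it does so by writing $n=2^{\alpha}q+r$ with a \emph{signed} remainder $|r|\le 2^{\alpha-1}$, so that $p$ divides one of the four nonzero integers $2^{|r|}k\pm1$ or $k\pm 2^{|r|}$; the worst case is $2^{|r|}k+1$, and the whole point of balancing the remainder is to force $2^{|r|}\le 2^{2^{\alpha-1}}={\sqrt{p-1}}$, whence $p\le {\sqrt{p-1}}\,k+1$ and so $p<k^2$ after a short parity argument to make the inequalities strict. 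You instead normalise $-2^{-n}$ inside the cyclic group $\langle 2\rangle$ of order $2^{\alpha+1}$ to the canonical form $\pm 2^{t}$ with $0\le t<2^{\alpha}$, which always isolates $k$ additively: $p\mid k\mp 2^{t}$ with $2^{t}\le (p-1)/2$. This eliminates the troublesome multiplicative case $2^{|r|}k\pm1$ altogether, and your sign analysis then yields the sharper conclusion $p\le 2k-1$ (attained, e.g., by $17\mid 2^{5}\cdot 9+1=17^{2}$) or even $p<k$, from which $p<k^{2}$ is immediate for $k\ge 3$. Both arguments are elementary and of comparable length; yours buys a bound linear in $k$ where the paper settles for a quadratic one, which is all that the subsequent estimates (the bound $N_1<k^4$ and the finite lists of admissible Fermat primes in the proof of Theorem \ref{thm:2}) actually require.
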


\begin{proof}
If $\alpha=0$, then $p=3<k^2$ because $k\ge 3$. So, we assume that $\alpha\ge 1$. We write $n=2^{\alpha} q+r$, where $|r|\le 2^{\alpha-1}$.
Then
$$
N=2^{n} k+1=2^{2^{\alpha} q+r} k+1\equiv (-1)^q 2^{r} k+1\pmod p.
$$
It then follows easily that $p$ divides one of $2^{|r|} k\pm 1$ or $k\pm 2^{|r|}$ according to the parity of $q$ and the sign of $r$. None of the above expressions is zero and the maximum such expression is $2^{|r|} k+1$. Hence, $p\le 2^{|r|}k+1\le 2^{2^{\alpha-1}} k+1$, which implies
$2^{2^{\alpha-1}}\le k$, so $2^{2^{\alpha}}\le k^2$. Clearly, the inequality is in fact strict since the left--hand side is even and the right--hand side is odd, so $p=2^{2^{\alpha}}+1\le k^2$, and the inequality is again strict since $p$ is prime and $k^2$ isn't, which completes the proof of the lemma.
\end{proof}

Primes factors $p=2^m d+1$ of $N$ for which $2^n k$ and $2^m d$ are
multiplicatively dependent play a peculiar role in the subsequent
argument. In what follows, we prove that there can be at most one
such prime factor.

\begin{lemma}
\label{lem:3} Assume that $p=2^m d+1$ is a proper prime divisor of
the integer $N=2^n k+1$, such that  $d\mid k$ and $2^m d$ and $2^n
k$ are multiplicatively dependent. Then $p\le 2^{n/3} k^{1/3}+1$.
Furthermore $N$ has at most a prime factor $p$ such that $p-1$ and
$N-1$ are multiplicatively dependent.
\end{lemma}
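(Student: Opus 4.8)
The plan is to exploit the strong rigidity forced by multiplicative dependence, using as the only non-trivial elementary ingredient the fact that for an integer $a\ge 2$ and positive integers $i\le j$ one has $a^i+1\mid a^j+1$ if and only if $i\mid j$ and $j/i$ is odd (the \emph{only if} direction follows by writing $j=qi+r$ with $0\le r<i$, reducing $a^j+1$ modulo $a^i+1$, and checking that this forces $r=0$ and $q$ odd).

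First I would unwind the hypothesis. Since $2^md\ge 2$ and $2^nk\ge 2$ are multiplicatively dependent, there are coprime positive integers $u,v$ with $(2^md)^v=(2^nk)^u$; comparing, prime by prime, the exponents on the two sides and using $\gcd(u,v)=1$ produces an integer $w\ge 2$ with $2^md=w^u$ and $2^nk=w^v$. Since $p$ is a \emph{proper} divisor of $N$, we have $2^md=p-1<N-1=2^nk$, hence $w^u<w^v$, i.e. $u<v$. Now $p=w^u+1$ divides $N=w^v+1$, so the divisibility fact gives $u\mid v$ with $v/u$ odd; together with $\gcd(u,v)=1$ this forces $u=1$, and then $v$ is odd with $v>1$, so $v\ge 3$. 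Consequently $p-1=2^md=w=(2^nk)^{1/v}\le(2^nk)^{1/3}=2^{n/3}k^{1/3}$, which is the first assertion. Observe that what has really been proved is the sharper structural statement that $N-1=(p-1)^{v}$ for some odd $v\ge 3$; this is the key input for the second assertion.

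For the \emph{furthermore}, I would argue by contradiction. Suppose $p_1\ne p_2$ are prime factors of $N$ such that $p_i-1$ and $N-1$ are multiplicatively dependent for $i=1,2$. Then $p_1p_2\mid N$, so each $p_i$ is a proper divisor of $N$ and the previous paragraph applies: $N-1=(p_i-1)^{v_i}$ with $v_i\ge 3$ odd. Write $N-1=w_0^{\,e}$ with $e$ chosen as large as possible, so that $w_0\ge 2$ is not a perfect power; then $v_i\mid e$ and $p_i-1=w_0^{\,e/v_i}$. Since $p_i=w_0^{\,e/v_i}+1$ is prime and $w_0\ge 2$, the exponent $e/v_i$ cannot have an odd divisor greater than $1$ --- such a divisor would, by the same divisibility fact, split off a nontrivial factor of $p_i$ --- so $e/v_i$ is a power of $2$; and because $v_i$ is odd that power of $2$ is exactly $2^{\nu_2(e)}$, the same for $i=1$ and $i=2$. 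Hence $e/v_1=e/v_2$, so $p_1-1=p_2-1$, contradicting $p_1\ne p_2$.

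I do not expect a serious conceptual obstacle: the work is mostly bookkeeping in the passage to the common base $w$ (resp. $w_0$) via prime valuations and in checking the divisibility fact. The point that must not be overlooked is the hypothesis that $p$ is a \emph{proper} divisor, since that is precisely what upgrades $u<v$ to $v\ge 3$ (without it one only gets $v\ge 1$, and the exponent $1/3$ is lost). The one mildly clever step is the end of the second assertion, where two independent constraints --- ``$v_i$ odd'' (coming from $p_i\mid N$) and ``$e/v_i$ a power of $2$'' (coming from $p_i$ having the shape $w_0^{\,\cdot}+1$ with $p_i$ prime) --- together pin down $e/v_i$ and force all such prime factors to coincide.
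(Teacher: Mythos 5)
Your proof is correct and follows essentially the same route as the paper: both reduce $p-1$ and $N-1$ to powers of a common primitive base (your $w_0$ is the paper's minimal root $\rho$), use the criterion for $w^i+1\mid w^j+1$ together with the primality of $p$ to kill the odd part of the exponent of $p-1$, and conclude both that the exponent quotient is an odd integer $\ge 3$ (giving the bound $p\le 2^{n/3}k^{1/3}+1$) and that $p=w_0^{2^{\nu_2(e)}}+1$ is uniquely determined by $N-1$. The only cosmetic difference is that you first normalize to coprime exponents and record $N-1=(p-1)^{v}$ before passing to the primitive base for the uniqueness step, whereas the paper works with $\rho$ throughout and proves the divisibility criterion by a congruence argument on $2$-adic valuations.
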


\begin{proof}
Let $\rho$ be the minimal positive integer such that $2^n k=\rho^u$
for some positive integer $u$. Since $2^m d$ and $2^n k$ are
multiplicatively dependent, it follows that $2^m d=\rho^v$ for some
positive integer $v$. Since $2^m d<2^n k$, it follows that $v<u$.
Furthermore, $\rho^v\equiv -1\pmod p$ and also $\rho^u\equiv -1\pmod
p$.  This implies easily that $\nu_2(u)=\nu_2(v)$, where $\nu_p(m)$
denotes the exponent of the prime $p$ in the factorization of $m$.
To see this, write $u=2^{\alpha_u}u_1,\ v=2^{\alpha_v}v_1$ with
$u_1,v_1$ odd integers and assume, for example, that
$\alpha_u<\alpha_v$. We get a contradiction observing that
$$-1\equiv  \rho^{vu_1}\equiv
(\rho^{2^{\alpha_u}u_1v_1})^{2^{\alpha_v-\alpha_u}}\equiv \left
(\rho^{uv_1}\right )^{2^{\alpha_v-\alpha_u}}\equiv 1\pmod p.$$

Writing $\alpha=\nu_2(u)=\nu_2(v)$, we get that $u=2^{\alpha}
u_1,~v=2^{\alpha} v_1$ for some odd integers $u_1$ and $v_1$.
Furthermore, since $p=(\rho^{2^{\alpha}})^{v_1}+1$ is prime, it
follows that $v_1=1$, otherwise $p$ would have $\rho^{2^{\alpha}}+1$
as a proper factor. This shows that $p$ is uniquely determined in
terms of $2^n k$. Furthermore, since $u_1\ge 3$, we get that
$\rho^{2^{\alpha}}\le (2^n k)^{1/3}$, so $p\le 2^{n/3} k^{1/3}+1$.
\end{proof}

The next lemma shows that each of the prime factors  $p=2^m d+1$ of the Carmichael number $N=2^n k+1$ for which
$2^m d $ and $2^n k $ are multiplicatively independent is small.

\begin{lemma}
\label{lem:4} Assume that $p=2^m d+1$ is a prime divisor of the
Carmichael number $N=2^n k+1$ such that $d>1$ and $2^n k$ and $2^m
d$ are multiplicatively independent. Then
$$
m<7 {\sqrt{n\log k}}\qquad  {\text{whenever}}\qquad n>3\log k.
$$
\end{lemma}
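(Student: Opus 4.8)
The plan is to exploit the two congruences that come for free. Since $p\mid N$ we have $2^nk\equiv-1\pmod p$, and since $p=2^md+1$ we have $2^md\equiv-1\pmod p$; in particular $2$, $d$ and $k$ are all units modulo $p$ (immediate from these two congruences). By Korselt's criterion (Lemma~\ref{lem:1}) we also have $p-1=2^md\mid N-1=2^nk$, and since $k/d$ is odd this forces $m\le n$. Now I reduce the exponent $n$ modulo $m$: write $n=sm+r$ with $s$ the nearest integer to $n/m$, so that $|r|\le m/2$ and $s\ge 1$. Substituting $2^m\equiv-d^{-1}\pmod p$ into $2^nk\equiv-1\pmod p$ and clearing the (invertible) denominators produces a divisibility $p\mid\mu$, where
\[
\mu=2^{|r|}k-(-1)^{s+1}d^{s}\ \ (r\ge 0)\qquad\text{or}\qquad\mu=k-(-1)^{s+1}2^{|r|}d^{s}\ \ (r\le 0)
\]
is an explicit integer.

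The crucial observation is that $\mu\neq 0$. Indeed, if $\mu=0$ then, comparing signs, $s$ must be odd and one has $2^{|r|}k=d^{s}$ (when $r\ge 0$) or $k=2^{|r|}d^{s}$ (when $r\le 0$); in either case substituting back into $n=sm+r$ gives $2^nk=2^{sm}d^{s}=(2^md)^{s}$, so $2^nk$ and $2^md$ would be multiplicatively dependent, contrary to hypothesis. Hence $p$ divides the nonzero integer $\mu$, so $p\le|\mu|$. A crude triangle inequality, using $|r|\le m/2$ together with $k\le k^{s}$ and $d^{s}\le k^{s}$ (valid since $s\ge 1$), gives $|\mu|\le 2^{m/2+1}k^{s}$. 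Since $p>2^m$, we conclude $2^{m}<2^{m/2+1}k^{s}$, that is,
\[
\Bigl(\tfrac m2-1\Bigr)\log 2<s\log k\le\Bigl(\tfrac nm+\tfrac12\Bigr)\log k .
\]

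To finish, multiply through by $2m$ to obtain the quadratic inequality $m^2\log 2-m(2\log 2+\log k)-2n\log k<0$; solving it and using $\sqrt{A^2+B}\le A+\sqrt B$ yields $m<2+\dfrac{\log k}{\log 2}+\sqrt{\dfrac2{\log 2}}\,\sqrt{n\log k}$. The hypothesis $n>3\log k$ gives $\log k<\tfrac1{\sqrt3}\sqrt{n\log k}$, which absorbs the lower order terms, and a short computation then shows $2+\tfrac{\log k}{\log 2}+\sqrt{\tfrac2{\log 2}}\,\sqrt{n\log k}<7\sqrt{n\log k}$, as claimed. The only genuinely delicate step in the whole argument is the reduction $n=sm+r$ manufacturing a \emph{small nonzero} multiple of $p$; everything afterwards is routine estimation, and the constant $7$ is far from optimal — the same argument in fact gives roughly $3.6\sqrt{n\log k}$.
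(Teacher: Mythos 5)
Your proof is correct, and it reaches the stated bound by a genuinely different (and more elementary) route than the paper's. The paper runs a two-dimensional pigeonhole argument: with $X=n/\log k$ it considers the $(\lfloor X^{1/2}\rfloor+1)^2>X$ numbers $mu+nv$ with $0\le u,v\le\lfloor X^{1/2}\rfloor$, extracts a difference $(u,v)\ne(0,0)$ with $\max\{|u|,|v|\}\le X^{1/2}$ and $|um+vn|\le 3\sqrt{n\log k}$, and concludes that $p$ divides the nonzero numerator of $2^{um+vn}d^uk^v-(-1)^{u+v}$; this yields $p<2^{7\sqrt{n\log k}}$ directly, and the bound on $m$ follows from $p>2^m$. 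You instead use the single lattice point $(u,v)=(-s,1)$ furnished by centered division, $n=sm+r$ with $|r|\le m/2$, obtaining the asymmetric bound $p\le 2^{m/2+1}k^{\,n/m+1/2}$; playing this off against $p>2^m$ gives a quadratic inequality in $m$ which you then solve. The non-vanishing step --- the only place where multiplicative independence of $2^nk$ and $2^md$ enters --- is essentially identical in the two arguments, and your estimates all check out, including $s\ge 1$, which rests on $m\le n$; that in turn follows from Korselt and the oddness of $d$ and $k$ (your phrase ``$k/d$ is odd'' tacitly assumes $d\mid k$, which holds in context but is not needed: $\nu_2(2^md)=m\le n=\nu_2(2^nk)$ suffices). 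What the paper's symmetric choice buys is a bound on $p$ itself that makes no reference to $m$; what yours buys is the avoidance of the pigeonhole and a better constant (about $3.5$ in place of $7$). Neither difference matters downstream, since the main proof only uses $m<7\sqrt{n\log k}$ together with $d\le k$.
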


\begin{proof}
Let $p=d2^m+1$ be the prime factor of $k2^n+1$. Put $X=n/\log k$. Consider the congruences
\begin{equation}
\label{eq:congs}
d2^m\equiv -1\pmod p\qquad {\text{\rm and}}\qquad k2^n\equiv -1\pmod p.
\end{equation}
Look at the set of numbers
$$
\{mu+nv:(u,v)\in \{0,1,\ldots,\lfloor X^{1/2}\rfloor\} \}.
$$
All the  numbers in the above set are in the interval $[0,2nX^{1/2}]$ and there are $(\lfloor X^{1/2}\rfloor+1)^2>X$ of them. Thus, there exist $(u_1,v_1)\ne (u_2,v_2)$ such that
$$
|(mu_1+nv_1)-(mu_2+nv_2)|\le \frac{2nX^{1/2}}{X-1}<\frac{3n}{X^{1/2}}=3{\sqrt{n\log k}}
$$
provided that $X>3$, which is equivalent to $n>3\log k$.
We put $u=u_1-u_2$ and $v=v_1-v_2$. Then
\begin{equation}
\label{eq:impo}
(u,v)\ne (0,0),\quad  \max\{|u|,|v|\}\le X^{1/2}\quad {\text{\rm and}}\quad |um+vn|\le 3{\sqrt{n\log k}}.
\end{equation}
We may also assume that $\gcd(u,v)=1$, otherwise we may replace
the pair $(u,v)$ by the pair $(u/\gcd(u,v)),v/\gcd(u,v))$ and then all inequalities \eqref{eq:impo} are still satisfied.
In the system of congruences \eqref{eq:congs}, we exponentiate the first one to $u$ and the second one to $v$ and multiply the resulting congruences getting
$$
2^{um+vn}d^uk^v\equiv (-1)^{u+v}\pmod p.
$$
Thus, $p$ divides the numerator of the rational number
\begin{equation}
\label{eq:signs}
2^{um+vn} d^{u} k^{v}-(-1)^{u+v}.
\end{equation}
Let us see that the expression appearing at \eqref{eq:signs} above
is not zero. Assume that it is. Then, since $k$ and $d$ are odd, we
get that $um+vn=0$, $d^uk^v=1$ and $u+v$ is even. In particular,
$(2^m d)^u (2^n k)^v=1$, which is false because $(u,v)\ne (0,0)$ and
$2^n k$ and $2^m d$ are multiplicatively independent. Thus, the
expression \eqref{eq:signs} is nonzero. Since $p$ is a divisor of
the numerator of the nonzero rational number shown at
\eqref{eq:signs}, we get, by using also \eqref{eq:impo}, that
\begin{eqnarray}
\label{eq:boundforp}
p& \le & 2^{|um+vn|}d^{|u|}k^{|v|}+1  \le  2^{1+3{\sqrt{n\log k}}} k^{2X^{1/2}}\nonumber\\
& = & 2^{1+\left(3+2/\log 2\right){\sqrt{n\log k}}}<2^{7{\sqrt{n\log k}}},
\end{eqnarray}
because $2/\log 2<3$, which implies the desired conclusion.
\end{proof}

\section{The Quantitative Subspace Theorem}

We need a quantitative version of the Subspace Theorem due to
Evertse \cite{Ev}. Let us recall it. Let $M_{\Q}$ be all the places of $\Q$;
i.e. the ordinary absolute value and the p-adic absolute value. For
$y\in \Q$ and $w\in M_{\Q}$ we put $|y|_w=|y|$ if $w=\infty$ and
$|y|_w= p^{-\nu_p(y)}$ if $w$ corresponds to the prime number $p$.
When $y=0$, we set $\nu_p(y)=\infty$ and $|y|_w=0$. Then
$$
\prod_{y\in M_{\Q}} |y|_w=1\qquad {\text{\rm holds~for~all}}\qquad y\in \Q^*.
$$
Let $M\ge 2$ be a positive integer and define the {\it height} of the rational vector ${\bf y}=(y_1,\ldots,y_M)\in \Q^M$ as follows. For $w\in M_{\Q}$ write
$$
|{\bf y}|_w=\left\{ \begin{matrix} \left(\sum_{i=1}^M y_i^2\right)^{1/2} & {\text{\rm if}} & w=\infty;\\
\max\{|y_1|_w,\ldots,|y_M|_w\} & {\text{\rm if}} & w<\infty.\end{matrix}\right.
$$
Set
$$
{\mathcal H}({\bf y})=\prod_{w\in M_{\Q}} |{\bf y}|_w.
$$
For a linear form $L({\bf y})=\sum_{i=1}^M a_iy_i$ with ${\bf
a}=(a_1,\ldots,a_M)\in \Q^M$, we write ${\mathcal H}(L)={\mathcal
H}({\bf a})$.

\begin{theorem}[Evertse, \cite{Ev}]
\label{thm:EV}Let ${\mathcal S}$ be a finite subset of $M_{\Q}$
of cardinality $s$ containing the infinite place and for every $w\in
{\mathcal S}$ we let $L_{1,w},\ldots,L_{M,w}$ be $M$ linearly
independent linear forms in $M$ indeterminates whose coefficients in
$\Q$ satisfy
\begin{equation}
\label{eq:HHH} {\mathcal H}(L_{i,w})\le H\qquad {\text{\rm
for}}\quad i=1,\ldots,M\quad {\text{\rm and}}\quad w\in {\mathcal
S}.
\end{equation}
Let $0<\delta<1$ and consider the inequality
\begin{equation}
\label{eq:DP}
\prod_{w\in {\mathcal S}}\prod_{i=1}^M \frac{|L_{i,w}({\bf y})|_w}{|{\bf y}|_w}<\left(\prod_{w\in {\mathcal S}} |{\text{\rm det}}(L_{1,w},\ldots,L_{M,w})|_w\right) {\mathcal H}({\bf y})^{-M-\delta}.
\end{equation}
There exist linear subspaces
$T_1,\ldots,T_{t_1}$ of $\Q^M$ with
\begin{equation}
\label{eq:t1}
t_1\le \left(2^{60M^2} \delta^{-7M}\right)^s,
\end{equation}
such that every solution ${\bf y}\in \Q^N\backslash \{0\}$ of \eqref{eq:DP} satisfying ${\mathcal H}({\bf y})\ge H$ belongs to $T_1\bigcup \cdots \bigcup T_{t_1}$.
\end{theorem}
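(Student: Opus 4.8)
The plan is to establish this by the now-standard architecture of quantitative Subspace Theorems, which couples the geometry of numbers with a Roth-type auxiliary polynomial argument, in the sharpened form due to Evertse. I would first \emph{normalize} the data: using \eqref{eq:HHH} to bound the coefficients of the forms, rescaling each local system $L_{1,w},\dots,L_{M,w}$ so that the determinant factors $|\det(L_{1,w},\dots,L_{M,w})|_w$ are under control, and replacing each solution $\mathbf{y}$ by a primitive integer representative so that $\mathcal{H}(\mathbf{y})$ is comparable to the sup-norm of $\mathbf{y}$. After this reduction the double product $\prod_{w}\prod_i |L_{i,w}(\mathbf{y})|_w/|\mathbf{y}|_w$ on the left of \eqref{eq:DP} plays exactly the role that the single product $|L_1(\mathbf{x})\cdots L_M(\mathbf{x})|$ plays in Roth's theorem, while the saving $\mathcal{H}(\mathbf{y})^{-M-\delta}$ is the gain to be exploited.

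Next I would carry out the \emph{geometry-of-numbers reduction}. To each solution I attach, for every $w\in\cS$, the local convex body cut out by the forms $L_{i,w}$, and I record its successive minima; Minkowski's second theorem together with the product formula forces the tuples of minima taken over all places of $\cS$ to be tightly linked, and \eqref{eq:DP} drives the smallest minimum far below the height. A Davenport--Schmidt type argument then shows that the solutions fall into boundedly many classes according to the approximate values of these minima-tuples, and that within a single class the solutions line up along a common flag of subspaces. Taking the product of the per-place counts over the $s$ places of $\cS$ is what introduces the $s$-fold product structure visible in \eqref{eq:t1}.

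The technical core, and the step I expect to be the main obstacle, is bounding the \emph{large} solutions, namely those whose height exceeds a threshold, in the case that the convex geometry alone cannot resolve. Here I would argue by contradiction: assuming many large solutions in general position, I construct a multihomogeneous auxiliary polynomial $P$ in several blocks of $M$ variables, with prescribed large degrees chosen so that the degree weights compensate for the differing heights, with integer coefficients of controlled size produced by Siegel's lemma, and vanishing to high index along the subspaces attached to the chosen solutions. Evaluating $P$ at the corresponding product point and invoking the product formula yields, from the smallness in \eqref{eq:DP}, a lower bound on the index, whereas the generalized Roth lemma (the Esnault--Viehweg / Faltings index estimate on products of projective spaces) yields an incompatible upper bound. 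The contradiction caps the number of independent large solutions, hence the number of subspaces they can span; the explicit constants coming out of the Roth lemma and of Siegel's lemma are precisely what produce the exponents $60M^2$ and $7M$ appearing in \eqref{eq:t1}.

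Finally I would stitch the two regimes together, verifying that the threshold separating small from large solutions can be taken to be $H$, so that every $\mathbf{y}$ with $\mathcal{H}(\mathbf{y})\ge H$ is covered by the union $T_1\cup\cdots\cup T_{t_1}$, and that this union has the claimed cardinality. The delicate point throughout is uniformity: keeping the index estimate in the multiprojective Roth lemma independent of the number of blocks, and tracking the constants coherently across all $w\in\cS$. Once this uniform control is secured, the remainder is a careful but routine optimization of the degree and index parameters.
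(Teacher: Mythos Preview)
The paper does not prove this theorem at all: it is quoted verbatim as a result of Evertse \cite{Ev} and used as a black box in the proof of Lemma~\ref{lem:8}. So there is no ``paper's own proof'' to compare against.

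That said, your outline is a fair high-level description of the architecture behind quantitative subspace theorems of this type: normalization of the forms, a geometry-of-numbers reduction via successive minima, a gap argument separating small and large solutions, and a multihomogeneous auxiliary-polynomial construction together with a Roth/Faltings index lemma to bound the large solutions. This is indeed the strategy in Evertse's paper. However, what you have written is a plan rather than a proof: the actual work---the explicit parameter choices that yield the specific constants $60M^2$ and $7M$ in \eqref{eq:t1}, the precise form of the index estimate, the combinatorics of the successive-minima classes, and the verification that the small/large threshold can be taken as $H$---is lengthy and delicate, and none of it is carried out here. For the purposes of this paper, simply citing \cite{Ev} is the intended and appropriate course.
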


We shall apply Theorem \ref{thm:EV} to a certain finite subset of ${\mathcal S}$ of $M_{\Q}$ and certain systems of linear forms $L_{i,w}$ with $i=1,\ldots,M$
and $w\in {\mathcal S}$. Moreover, in our case the points ${\bf y}$ for which \eqref{eq:DP} holds are in $(\Z^*)^M$.
In particular $|{\bf y}|_w\le 1$ will hold for
all finite $w\in M_{\Q}$, as well as the inequalities
$$
1\le {\mathcal H}({\bf y})\le \prod_{w\in {\mathcal S}} |{\bf
y}|_w\le M\max\{|y_i|:i=1,\ldots,M\}.
$$
Finally, our linear forms will have integer coefficients and will in fact satisfy
\begin{equation}
\label{eq:det}
{\text{\rm det}}(L_{1,w},\ldots,L_{M,w})=\pm 1\qquad {\text{\rm for~all}}\qquad w\in {\mathcal S}.
\end{equation}
With these conditions, the following is a straightforward consequence of Theorem \ref{thm:EV} above.

\begin{corollary}
\label{cor}
Assume that \eqref{eq:det} is satisfied, that $0<\delta<1$, and consider the inequality
\begin{equation}
\label{eq:DP1}
\prod_{w\in {\mathcal S}} \prod_{i=1}^M |L_{i,w}({\bf y})|_w<M^{-\delta} \left(\max\{|y_i|:i=1,\ldots,M\}\right)^{-\delta}
\end{equation}
for some ${\bf y}\in (\Z^*)^M$. Then the conclusion  of Theorem
\ref{thm:EV} holds.
 \end{corollary}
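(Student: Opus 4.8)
The plan is to deduce the corollary directly from Theorem \ref{thm:EV} by checking that, under the hypotheses listed, every vector ${\bf y}\in(\Z^*)^M$ satisfying \eqref{eq:DP1} also satisfies \eqref{eq:DP}; once this is established, Theorem \ref{thm:EV} applied with the same $\mathcal S$, the same forms $L_{i,w}$, the same $\delta$, and the same $H$ immediately produces the subspaces $T_1,\ldots,T_{t_1}$ with $t_1\le(2^{60M^2}\delta^{-7M})^s$ containing every solution with $\mathcal H({\bf y})\ge H$.

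First I would isolate the two elementary facts recorded just after Theorem \ref{thm:EV}. Since every entry of ${\bf y}$ is a nonzero integer, $|{\bf y}|_w\le 1$ at every finite place, so discarding the finite places not lying in $\mathcal S$ can only enlarge the product and we get $1\le\mathcal H({\bf y})\le\prod_{w\in\mathcal S}|{\bf y}|_w\le M\max\{|y_i|:1\le i\le M\}$. Next I would use the normalization \eqref{eq:det}: a rational number equal to $\pm1$ has absolute value $1$ at \emph{every} place, so $|\det(L_{1,w},\ldots,L_{M,w})|_w=1$ for each $w\in\mathcal S$, whence $\prod_{w\in\mathcal S}|\det(L_{1,w},\ldots,L_{M,w})|_w=1$ and the right-hand side of \eqref{eq:DP} collapses to $\mathcal H({\bf y})^{-M-\delta}$.

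With these in hand the verification is a short chain of inequalities. Assuming \eqref{eq:DP1}, I would write
\begin{align*}
\prod_{w\in\mathcal S}\prod_{i=1}^M|L_{i,w}({\bf y})|_w
&< M^{-\delta}\bigl(\max\{|y_i|\}\bigr)^{-\delta}=\bigl(M\max\{|y_i|\}\bigr)^{-\delta}\\
&\le\mathcal H({\bf y})^{-\delta}=\mathcal H({\bf y})^{M}\,\mathcal H({\bf y})^{-M-\delta}\le\Bigl(\prod_{w\in\mathcal S}|{\bf y}|_w\Bigr)^{M}\mathcal H({\bf y})^{-M-\delta},
\end{align*}
using $\mathcal H({\bf y})\le M\max\{|y_i|\}$ in the second step and $\mathcal H({\bf y})\le\prod_{w\in\mathcal S}|{\bf y}|_w$ in the last. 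Dividing through by $\prod_{w\in\mathcal S}|{\bf y}|_w^{M}>0$ and reinstating the determinant factor, the left-hand side becomes exactly $\prod_{w\in\mathcal S}\prod_{i=1}^M|L_{i,w}({\bf y})|_w/|{\bf y}|_w$ and the right-hand side exactly the right-hand side of \eqref{eq:DP}. Thus \eqref{eq:DP} holds, and Theorem \ref{thm:EV} applies as claimed.

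I do not expect a genuine obstacle: the statement is essentially a repackaging of Theorem \ref{thm:EV} for integer points together with unimodular systems of forms. The only points requiring care are the \emph{direction} of each estimate --- every step must relax the inequality one is trying to establish --- and the remark that \eqref{eq:det} makes the determinant product equal to $1$ at all places of $\mathcal S$, not merely at the archimedean one, so that this factor genuinely disappears from \eqref{eq:DP}.
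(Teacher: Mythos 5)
Your proposal is correct and follows exactly the route the paper intends: the paper states the corollary as a ``straightforward consequence'' after recording precisely the two facts you use, namely the chain $1\le\mathcal H({\bf y})\le\prod_{w\in\mathcal S}|{\bf y}|_w\le M\max\{|y_i|\}$ for ${\bf y}\in(\Z^*)^M$ and the vanishing of the determinant factor at all places under \eqref{eq:det}. Your chain of inequalities (with the correct reversal under the negative exponent $-\delta$) fills in the omitted verification that \eqref{eq:DP1} implies \eqref{eq:DP}, so Theorem \ref{thm:EV} applies as claimed.
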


\section{$S$-units on curves}

 We shall also use a result concerning bounds on the number of solutions of a certain type of ${\mathcal S}$-unit equation.
 Recall that an ${\mathcal S}$-{\it unit} is a non-zero rational number $y$ such that $|y|_w=1$ for all $w\not\in {\mathcal S}$. The following result is a corollary of Theorem 1.1 in \cite{Poi}.

 \begin{theorem}[Pontreau]
 \label{thm:subspace}
 Let $f(X,Y)\in \Q[X,Y]$ be a polynomial of degree $D$ which is irreducible (over ${\mathbb C}$) and which is not a binomial (i.e., has more than two nonzero coefficients).
 Then the number of solutions  $(u,v)$ of the equation
 \begin{equation}
 \label{eq:fuv}
 f(u,v)=0\qquad with\qquad (u,v)\in {\mathcal S}^2
 \end{equation}
 is bounded above by
 \begin{equation}
 \label{eq:t3}
 t_2\le 2^{104s+51} D^{6s+3} (\log(D+2))^{10s+6}.
 \end{equation}
  \end{theorem}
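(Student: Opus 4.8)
The plan is to turn \eqref{eq:fuv} into an $\cS$-unit equation and feed it to the quantitative tools just recalled, the point being to count solutions \emph{on the curve} $f=0$ rather than on the hyperplane its monomials span. Write
$$
f(X,Y)=\sum_{\ell=1}^{r}a_\ell X^{i_\ell}Y^{j_\ell},\qquad a_\ell\neq 0,\qquad 3\le r\le \binom{D+2}{2}.
$$
If $(u,v)\in\cS^2$ satisfies $f(u,v)=0$, then the numbers $z_\ell:=a_\ell u^{i_\ell}v^{j_\ell}$ satisfy $z_1+\cdots+z_r=0$, and every ratio $z_\ell/z_m$ lies in the finite-rank multiplicative group $\Gamma$ generated by the $\cS$-units together with the ratios $a_\ell/a_m$. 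Since $f$ is irreducible over $\C$ and is not a binomial, its support is not contained in an affine line (a $\C$-irreducible polynomial whose Newton polygon is a segment has at most two monomials); hence, after relabeling, $(i_1,j_1),(i_2,j_2),(i_3,j_3)$ are not collinear, and the integer $D_0:=(i_2-i_1)(j_3-j_1)-(i_3-i_1)(j_2-j_1)$ satisfies $0<|D_0|\le 2D^2$. Knowing the projective class $[z_1:\cdots:z_r]$ fixes the values of all the monomials $u^{i_\ell-i_1}v^{j_\ell-j_1}$; reading this off for $\ell=2,3$ and putting the resulting $2\times 2$ integer exponent matrix in Smith normal form shows that $(u,v)$ is then determined up to at most $|D_0|$ possibilities. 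Hence it suffices to bound the number of projective classes $[z_1:\cdots:z_r]$ arising in this way.

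I would then split these classes into the \emph{non-degenerate} ones, where no proper subsum of $z_1+\cdots+z_r$ vanishes, and the degenerate ones. For the non-degenerate classes, the relevant tool is the quantitative bound for the projective $\cS$-unit equation $z_1+\cdots+z_r=0$ that follows from Theorem~\ref{thm:EV} and Corollary~\ref{cor}. Applied naively to $\Q^r$ this only gives a bound exponential in $r^2$, hence in $D^4$, which is far weaker than \eqref{eq:t3}. The sharpening uses that the $z_\ell$ are not arbitrary $\cS$-units but monomials in the two quantities $u,v$: one stays on the curve $f=0$ inside $\mathbb G_m^2$ and chooses the auxiliary forms $L_{i,w}$ and the parameter $\delta$ so that only the dimension $2$ of the curve and the degree $D$ (which controls the heights ${\mathcal H}(L_{i,w})$) enter the estimate; a gap principle bounds the number of solutions $(u,v)$ of large height, while the lower bound for the height on a curve that is not a coset of a subtorus (using once more that $f$ is irreducible and not a binomial) bounds the number of solutions of small height. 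This is the mechanism of \cite[Theorem~1.1]{Poi}, and it gives a bound for the non-degenerate classes of the same shape as the bound in \eqref{eq:t3}.

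For a degenerate class, pick a minimal nonempty proper $I\subset\{1,\dots,r\}$ with $\sum_{\ell\in I}z_\ell=0$. Then $(u,v)$ lies on $V(f)\cap V(f_I)$, where $f_I:=\sum_{\ell\in I}a_\ell X^{i_\ell}Y^{j_\ell}$ has strictly fewer monomials; since $f$ is irreducible and $f_I$ is not a scalar multiple of $f$, these curves share no component, so the intersection is finite. The crude option of summing over all $2^r$ subsets $I$ is exponential in $r\asymp D^2$ and useless, so instead one decomposes each $f_I$ into $\C$-irreducible factors: a binomial factor is a coset of a $1$-dimensional subtorus, met by $V(f)$ in at most $2D^2$ points, and there are fewer than $r^2$ such cosets; each remaining non-binomial irreducible factor is fed back into the argument along an induction whose complexity parameter (a suitably weighted count of the monomials, arranged so that passing to a factor does not increase it) strictly decreases, with the estimate of the previous paragraph reapplied at every level. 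Summing the contributions over the controlled number of levels yields \eqref{eq:t3}.

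The real obstacle is this last bookkeeping together with the refinement of the non-degenerate estimate: one must arrange \emph{every} application of the Subspace Theorem and \emph{every} union bound over vanishing subsums so that the dependence on $D$ stays polynomial rather than exponential in the number $r$ of monomials of $f$. Carrying this out in detail is exactly the content of \cite{Poi}, and Theorem~\ref{thm:subspace} is the specialization of \cite[Theorem~1.1]{Poi} to $\K=\Q$ and to the count we need; accordingly, beyond the reduction above I would simply quote that reference.
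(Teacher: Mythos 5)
The paper gives no proof of this statement: it is imported wholesale as a corollary of Theorem 1.1 of \cite{Poi}, which is exactly the reference you end up quoting, so your proposal and the paper coincide on the only step that matters. The reduction you sketch beforehand (monomials as $\mathcal{S}$-units, non-collinear exponents from irreducibility plus non-binomiality, recovering $(u,v)$ from the projective class up to $|D_0|$ choices) is extra exposition the paper does not attempt, and is a reasonable account of how such a count is extracted from \cite{Poi}.
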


 \section{Baker's linear form in logarithms}

We need the following theorem due to Matveev (see \cite{Matveev} or Theorem 9.4 in \cite{Bug}).

\begin{theorem}
\label{thm:Matveev}
Let $t\ge 2$ be an integer, $\gamma_1, \ldots, \gamma_t$ be integers larger than $1$ and $b_1,\ldots,b_t$ be integers. Put
$$
B=\max\{|b_1|,\ldots,|b_t|\},
$$
and
$$
\Lambda=\gamma_1^{b_1}\cdots\gamma_t^{b_t}-1.
$$
Then, assuming that $\Lambda\neq 0$, we have
$$
|\Lambda|>\exp\left(-1.4\times 30^{t+3}\times t^{4.5}(1+\log B)(\log\gamma_1)( \log \gamma_2)\cdots (\log\gamma_t)\right).
$$
\end{theorem}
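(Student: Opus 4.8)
The plan is to establish this Baker-type lower bound by the classical method of auxiliary functions, arguing by contradiction. Write $C=1.4\times 30^{t+3}t^{4.5}(1+\log B)(\log\gamma_1)\cdots(\log\gamma_t)$ and suppose toward a contradiction that $|\Lambda|<e^{-C}$. Fixing a branch of the logarithm, set
$$
L=b_1\log\gamma_1+\cdots+b_t\log\gamma_t,
$$
so that $\gamma_1^{b_1}\cdots\gamma_t^{b_t}=1+\Lambda$; the assumed smallness of $|\Lambda|$ then forces $L$ to be extraordinarily close to $2\pi i\ell$ for some integer $\ell$ whose size is controlled by $B$ and the $\log\gamma_i$. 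The entire argument consists in showing that such an anomalously small value of the linear form cannot occur unless $\Lambda=0$, which is excluded by hypothesis.

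The heart of the proof is the construction, via \emph{Siegel's lemma} (the pigeonhole principle applied to an underdetermined system of integer linear equations), of a nonzero auxiliary polynomial $P(X_0,X_1,\ldots,X_t)\in\Z[X_0,\ldots,X_t]$ of carefully chosen partial degrees $L_0,L_1,\ldots,L_t$ and of controlled height. The polynomial is arranged so that the one-variable exponential polynomial
$$
\Phi(z)=P\bigl(z,\gamma_1^z,\ldots,\gamma_t^z\bigr)
$$
vanishes, together with its first several derivatives, at the integers $z=0,1,\ldots,S$ for a suitable $S$. Here the hypothesis enters decisively: because $\gamma_1^{b_1}\cdots\gamma_t^{b_t}$ lies within $|\Lambda|$ of $1$, many of the monomials occurring in $\Phi$ are nearly equal along the direction $(b_1,\ldots,b_t)$, so the linear system defining $P$ admits a small-height solution with far smaller degrees than a generic system of the same size would allow.

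With $\Phi$ in hand I would run the usual two-sided estimate, using a simplification special to this statement: since the $\gamma_i$ are ordinary integers, the values $\Phi(s)=P(s,\gamma_1^s,\ldots,\gamma_t^s)$ at integers $s$ are rational integers, so the arithmetic lower bound is trivial — any nonzero such value has absolute value at least $1$. On the analytic side, an extrapolation argument (iterated Schwarz lemma / maximum modulus on expanding discs, exploiting the built-in high-order vanishing together with the tininess of $L$) forces $|\Phi(s)|<1$, hence $\Phi(s)=0$, for every $s$ in a range much larger than the one put into the construction. A zero (multiplicity) estimate for the torus $\mathbb{G}_m^t$ then shows that such extensive vanishing is impossible for a nonzero $P$ of the permitted degrees unless the $\gamma_i^{b_i}$ satisfy an exact multiplicative relation, i.e. unless $\Lambda=0$. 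This contradiction is reached precisely under the assumption $|\Lambda|<e^{-C}$, which proves the theorem.

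The genuinely hard part is not this skeleton but the simultaneous optimization of all the parameters — the partial degrees $L_0,L_1,\ldots,L_t$, the order of vanishing, and the extrapolation range — so as to land on the sharp explicit constants $1.4\times 30^{t+3}$ and $t^{4.5}$ and, in particular, on the clean product $(\log\gamma_1)\cdots(\log\gamma_t)$ free of extraneous powers of $t$ or of $\log B$. The naive Baker method loses several such factors, and Matveev's improvement rests on a refined zero estimate together with a careful exploitation of the structure of the multiplicative group $\langle\gamma_1,\ldots,\gamma_t\rangle$ (a Kummer-type descent); reproducing those gains while tracking every numerical constant is where the real difficulty lies.
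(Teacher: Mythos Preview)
The paper does not prove this theorem at all: it is quoted as a black box, attributed to Matveev \cite{Matveev} (see also Theorem 9.4 in \cite{Bug}), and then invoked once in the proof of Lemma~\ref{lem:8} to bound $|k2^rd^{-q}-1|$ from below. There is therefore nothing in the paper to compare your argument against.

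As for the sketch itself, what you have written is a fair high-level description of the Baker--Matveev method (auxiliary polynomial via Siegel's lemma, extrapolation by Schwarz, zero estimate on $\mathbb{G}_m^t$, Kummer descent), and you correctly identify that the entire substance lies in the parameter optimization you explicitly decline to carry out. But precisely because of that, this is not a proof or even a proof proposal in any usable sense: every step you list (``carefully chosen partial degrees'', ``suitable $S$'', ``an extrapolation argument'', ``a zero estimate'') hides dozens of pages of hard estimates, and the specific constants $1.4\times 30^{t+3}$ and $t^{4.5}$ cannot be recovered from the outline you give. For the purposes of this paper the right move is simply to cite Matveev, exactly as the authors do.
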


\section{Proof of Theorem \ref{thm:1}}

Since Theorem \ref{thm:2} is in fact independent of Theorem \ref{thm:1}, we shall assume that $k\ge 27$ whenever $N=2^n k+1$ is Carmichael. In particular, $\log k>3$.

{From} now on we assume that
\begin{equation}
\label{eq:n0}
n>3\log k.
\end{equation}
In particular, Lemma \ref{lem:4} holds.

We put $\delta_0=(2{\sqrt{\tau(k)}})^{-1}$ and split the prime factors of the Carmichael number $N=2^n k+1$ into four subsets as follows:
\begin{itemize}
\item[(1)] Fermat primes;
\item[(2)] The (at most one) prime $p=2^m d+1$ such that $2^m d$ and $2^n k$ are multiplicatively dependent;
\item[(3)] The primes $p=2^m d+1$ not of type (1) or (2) above with $m<\delta_0 {\sqrt{n}}$;
\item[(4)] The remaining primes.
\end{itemize}

We write $N_i$ for the product of the primes of type $i$ above for $i=1,2,3,4$. We next find an upper bound for $N_1N_2N_3$. Clearly, writing $p=2^{2^{\alpha}}+1$ for the maximal Fermat prime factor of $N$, we have that
\begin{equation}
\label{eq:N1}
N_1\le \prod_{\beta=0}^{\alpha} (2^{2^{\beta}}+1)=2^{2^{\alpha+1}}-1=(p-1)^2-1<k^4,
\end{equation}
by Lemma \ref{lem:2}. Secondly,
\begin{equation}
\label{eq:N2}
N_2\le 2^{n/3} k^{1/3}+1<2^{n/3} k,
\end{equation}
by Lemma \ref{lem:3}. Further, putting $n_0=\delta_0 {\sqrt{n}}$, we
have
\begin{eqnarray}
\label{eq:N31}
N_3 & \le & \prod_{\substack{1\le m\le n_0\\ d\mid k}} (2^m d+1)\le \prod_{1\le m\le n_0} \prod_{d\mid k} 2^{m+1} d=\prod_{1\le m\le n_0} 2^{(m+1)\tau(k)} k^{\tau(k)/2}\nonumber\\
& \le & 2^{(n_0+1)(n_0+2)\tau(k)/2+n_0 \tau(k)\log k},
\end{eqnarray}
where we used the fact that $1/(2\log 2)<1$. Assume that the exponent of $2$ in \eqref{eq:N31} is at most $n_0^2 \tau(k)=n/4$. This happens if
$$
(n_0+1)(n_0+2)\tau(k)/2+n_0 \tau(k)\log k\le n_0^2 \tau(k),
$$
which is equivalent to
$$
2n_0 \log k<n_0^2-3n_0-2.
$$
Assuming that $n_0\ge 2$, the above inequality is implied by $n_0\ge 4+2\log k$, and since $\log k>3$, the last two inequalities are satisfied when $n_0>4\log k$. Recalling the definition of $n_0$, we deduce that if
\begin{equation}
\label{eq:n}
n>64 \tau(k) (\log k)^2,
\end{equation}
then \eqref{eq:N31} implies that
\begin{equation}
\label{eq:N3}
N_3<2^{n/4}.
\end{equation}
So, if inequality \eqref{eq:n} holds, then by estimates \eqref{eq:N1}, \eqref{eq:N2} and \eqref{eq:N3}, we get
$$
N_1N_2N_3<k^4 (2^{n/3} k) 2^{n/4}=2^{7n/12} k^5<2^{7n/12+10\log k}<2^{2n/3},
$$
where the last inequality follows because $5/\log 2<10$ and $n>120
\log k$, where the last inequality is implied by \eqref{eq:n}. Since
$N_1N_2N_3N_4=N>2^n$, we get that $N_4>2^{n/3}$. On the other hand,
by Lemma \ref{lem:4}, we have that if $p\mid N_4$, then
$$
p<2^{7{\sqrt{n\log k}}} k+1\le 2^{1+2\log k+7{\sqrt{n\log
k}}}<2^{8{\sqrt{n\log k}}},
$$
where the last inequality above is a consequence of \eqref{eq:n}. Hence,
$$
2^{n/3}<N_4<2^{8 \omega(N_4){\sqrt{n\log k}}},
$$
showing that
$$
\omega(N_4)>\frac{{\sqrt{n}}}{24{\sqrt{\log k}}}.
$$
We record what we have proved as follows.

\begin{lemma}
\label{lem:6}
Assume that
\begin{equation}
\label{eq:maxn}
n>64 \tau(k)(\log k)^2.
\end{equation}
Then there exist at least ${\sqrt{n}}/(24{\sqrt{\log k}})$ primes $p=2^m d+1$ dividing $2^n k+1$ subject to the following properties:
\begin{itemize}
\item[(1)] $d>1$ is a divisor of $k$;
\item[(2)] $\delta_0 {\sqrt{n}}<m<7{\sqrt{ n\log k}};$
\item[(3)] $2^m d$ and $2^n k$ are multiplicatively independent.
\end{itemize}
\end{lemma}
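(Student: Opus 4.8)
The plan is to work with the four-way partition of the prime divisors of $N=2^nk+1$ just introduced and to show that almost all of $N$ must be carried by the ``generic'' primes in class (4), which forces there to be many of them. First I would note that \eqref{eq:maxn} implies $n>192\log k$, in particular $n>3\log k$, so that \eqref{eq:n0} holds and Lemma~\ref{lem:4} is applicable; recall also $\log k>3$.

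Let $N_i$ denote the product of the primes in class $i$, so $N_1N_2N_3N_4=N$. The next step is to bound $N_1N_2N_3$ from above. For $N_1$, if $p=2^{2^{\alpha}}+1$ is the largest Fermat prime factor of $N$, then the telescoping identity $\prod_{\beta=0}^{\alpha}(2^{2^{\beta}}+1)=2^{2^{\alpha+1}}-1=(p-1)^2-1$ together with Lemma~\ref{lem:2} gives $N_1<k^4$. For $N_2$, Lemma~\ref{lem:3} gives directly $N_2\le 2^{n/3}k^{1/3}+1<2^{n/3}k$. For $N_3$, I would use the crude estimate $2^md+1\le 2^{m+1}d$, multiply over all $d\mid k$ (using $\prod_{d\mid k}d=k^{\tau(k)/2}$) and over $1\le m\le n_0:=\delta_0\sqrt n$, so that the exponent of $2$ in the resulting bound is at most $(n_0+1)(n_0+2)\tau(k)/2+n_0\tau(k)\log k$. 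The point of the choice $\delta_0=(2\sqrt{\tau(k)})^{-1}$ is that then $n_0^2\tau(k)=n/4$, and hypothesis \eqref{eq:maxn} --- equivalently $n_0>4\log k$ --- makes the displayed exponent at most $n_0^2\tau(k)=n/4$; hence $N_3<2^{n/4}$.

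Combining these gives $N_1N_2N_3<k^4\cdot 2^{n/3}k\cdot 2^{n/4}=2^{7n/12}k^5<2^{2n/3}$, the last inequality using $n>120\log k$ from \eqref{eq:maxn}. Since $N=N_1N_2N_3N_4>2^n$, this forces $N_4>2^{n/3}$. Now, by construction, every prime $p=2^md+1$ dividing $N_4$ is not a Fermat prime, is not the multiplicatively dependent prime of class (2), and satisfies $m\ge\delta_0\sqrt n$; so $d>1$, the numbers $2^md$ and $2^nk$ are multiplicatively independent, and Lemma~\ref{lem:4} gives $m<7\sqrt{n\log k}$. Consequently each such $p<2^{7\sqrt{n\log k}}k+1<2^{8\sqrt{n\log k}}$ (again by \eqref{eq:maxn}), whence $2^{n/3}<N_4<2^{8\omega(N_4)\sqrt{n\log k}}$, which yields $\omega(N_4)>\sqrt n/(24\sqrt{\log k})$. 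These $\omega(N_4)$ prime factors of $N_4$ are exactly the ones asserted, condition (2) being the pair of bounds $\delta_0\sqrt n\le m<7\sqrt{n\log k}$ just established.

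I expect the only genuinely delicate point to be the estimate for $N_3$: one must verify that the exponent of $2$ really drops below $n/4$ under precisely the stated lower bound on $n$, and this is what pins down the value of $\delta_0$ and uses $\log k>3$ (via the inequality $n_0\ge 4+2\log k$ implied by $n_0>4\log k$). Everything else is a mechanical combination of Lemmas~\ref{lem:2}, \ref{lem:3} and \ref{lem:4} with the elementary identities above and the routine book-keeping $k^c<2^{2c\log k}$.
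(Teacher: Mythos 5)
Your proposal is correct and follows essentially the same route as the paper: the same four-way classification, the same bounds $N_1<k^4$, $N_2<2^{n/3}k$, $N_3<2^{n/4}$ (with the identical exponent computation pinning down $\delta_0$), and the same final count via $N_4>2^{n/3}$ and the per-prime bound $p<2^{8\sqrt{n\log k}}$ from Lemma~\ref{lem:4}. No gaps.
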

We next take a look at prime divisors $p=d2^m+1$ of $N_4$. As we have seen, they have the property that
\begin{equation}
\label{eq:large}
m>n_0=\delta_0 {\sqrt{n}}.
\end{equation}
Write
\begin{equation}
\label{eq:divwrem}
n=qm+r,\quad {\text{\rm where}}\quad 0\le r\le m-1<7{\sqrt{n\log k}}.
\end{equation}
Then
\begin{equation}
\label{eq:boundforq} q=\left\lfloor \frac{n}{m}\right\rfloor\le
\frac{n}{m}\le \delta_0^{-1} {\sqrt{n}}\le 2\sqrt{\tau(k)n}.
\end{equation}
In congruences
$$
k2^{mq+r}\equiv -1\pmod p\qquad {\text{\rm and}}\qquad d2^m\equiv -1\pmod p,
$$
raise the second one to power $q$ and divide it out of the first one to get
$$
k2^r d^{-q}\equiv (-1)^{q-1} \pmod p.
$$
Thus, $p$ divides $d^q+(-1)^q k2^r$. Let us check that this last expression is nonzero. If it were zero, we would then get that $r=0$, that $q$ is odd, and that $k=d^q$, therefore $2^n k=(2^m d)^q$, which is impossible since $2^n k$ and $2^m d$ are multiplicatively independent.
Thus, $d^q+(-1)^q k2^r\ne 0$, and
\begin{eqnarray*}
|d^q+(-1)^{q} k2^r| & \le & 2^rd^q k\le 2^rk^{q+1}= 2^{r+(q+1)(\log
k)/(\log 2)}.
\end{eqnarray*}
Using \eqref{eq:divwrem} and  \eqref{eq:boundforq} we have that
\begin{eqnarray*}r+(q+1)\frac{\log k}{\log 2}&\le &7\sqrt{n\log
k}+2(\sqrt{\tau(k)n}+1)(\log k)/(\log 2)\\
&=&\frac{2(\log k)\sqrt{\tau(k)n}}{\log 2}\left (1+\frac{7\log
2}{\sqrt{\tau(k)\log k}}+\frac 1{\sqrt{\tau(k)n}}\right )\\
&<&\frac{2(\log k)\sqrt{\tau(k)n}}{\log 2}\left (1+\frac{7\log
2}{\sqrt{\tau(k)\log k}}+\frac 1{8\tau(k)\log k}\right )\\
&<&\frac{2(\log k)\sqrt{\tau(k)n}}{\log 2}\left (1+\frac{7\log
2}{\sqrt{2\log(27)}}+\frac 1{16\log(27)}\right )\\
&<&10(\log k)\sqrt{\tau(k)n}\end{eqnarray*}

Thus, writing $\delta_1=10(\log k){\sqrt{\tau(k)}}$, $U=d2^m$ and
$V=d^q+(-1)^q k2^r$, we have
$$
2^{\delta_0 {\sqrt{n}}}<U\qquad {\text{\rm and}}\qquad |V|<2^{\delta_1 {\sqrt{n}}},
$$
therefore
\begin{equation}
\label{eq:important} U>|V|^{\delta_2},\qquad {\text{\rm
where}}\qquad \delta_2={\delta_0}{\delta_1}^{-1}=(20 \tau(k)\log k
)^{-1}.
\end{equation}
We record the following conclusion.

\begin{lemma}
\label{lem:7}
Assume that inequality \eqref{eq:maxn} is satisfied. Then the number of triples of integers $(U,V_1,V_2)$ with the following properties:
\begin{itemize}
\item[(1)] $U=d2^m,\ V_1=d^q,\ V_2=(-1)^q k 2^r$;
\item[(2)] $d>1$ is a divisor of $k$ and $q$ and $r$ are nonnegative integers;
\item[(3)] $2^m d$ and $2^{mq+r} k$ are multiplicatively independent;
\item[(4)] $U+1\mid V_1+V_2$;
\item[(5)] $U>|V_1+V_2|^{\delta_2}$;
\end{itemize}
exceeds
$$
\frac{\sqrt{n}}{24{\sqrt{\log k}}}.
$$
\end{lemma}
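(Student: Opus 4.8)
The plan is to recognize that Lemma \ref{lem:7} is nothing more than a repackaging of Lemma \ref{lem:6} through the elementary computation already carried out in \eqref{eq:large}--\eqref{eq:important}. Assuming \eqref{eq:maxn}, Lemma \ref{lem:6} furnishes more than $\sqrt{n}/(24\sqrt{\log k})$ primes $p=2^m d+1$ dividing $N=2^n k+1$ with $d\mid k$, $d>1$, $\delta_0\sqrt n<m<7\sqrt{n\log k}$, and $2^m d$ multiplicatively independent of $2^n k$. To each such $p$ I would attach the triple $(U,V_1,V_2)$ with $U=d2^m$, $V_1=d^q$ and $V_2=(-1)^q k2^r$, where $n=qm+r$, $0\le r\le m-1$, is the Euclidean division of $n$ by $m$ as in \eqref{eq:divwrem}.

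Next I would verify that each such triple satisfies (1)--(5). Property (1) holds by definition, and (2) is immediate since $d>1$ divides $k$ by Lemma \ref{lem:6}(1) while $q,r\ge 0$ by construction. Property (3) is exactly Lemma \ref{lem:6}(3), because $mq+r=n$. For (4), raising $d2^m\equiv -1\pmod p$ to the power $q$ and dividing it out of $k2^{mq+r}\equiv -1\pmod p$ gives $k2^rd^{-q}\equiv (-1)^{q-1}\pmod p$, so $p\mid d^q+(-1)^q k2^r=V_1+V_2$; as $p=U+1$, this is (4). Property (5) is precisely \eqref{eq:important}: the nonvanishing of $V_1+V_2$ and the bound $|V_1+V_2|<2^{\delta_1\sqrt n}$ with $\delta_1=10(\log k)\sqrt{\tau(k)}$ were already established in the lines preceding \eqref{eq:important}, and combining these with $U=d2^m>2^m>2^{\delta_0\sqrt n}$ (see \eqref{eq:large}) yields $U>|V_1+V_2|^{\delta_0/\delta_1}=|V_1+V_2|^{\delta_2}$ with $\delta_2=(20\tau(k)\log k)^{-1}$.

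Finally I would observe that the assignment $p\mapsto (U,V_1,V_2)$ is injective: its first coordinate is $U=p-1$, which recovers $p$, hence $m=\nu_2(p-1)$, $d=(p-1)/2^m$, and then $q$ and $r$ (since $k$ and $n$ are fixed). Consequently the number of triples with properties (1)--(5) is at least the number of primes supplied by Lemma \ref{lem:6}, which exceeds $\sqrt n/(24\sqrt{\log k})$, and this is the assertion of the lemma.

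I do not anticipate any genuine obstacle here: the two substantive inputs — the count of relevant primes (Lemma \ref{lem:6}, itself resting on the two-variable pigeonhole argument behind Lemma \ref{lem:4}) and the fact that $V_1+V_2\ne 0$ together with $|V_1+V_2|<2^{\delta_1\sqrt n}$ — are already in place. The only points that need a little care are checking that the map $p\mapsto (U,V_1,V_2)$ loses no primes (i.e.\ is injective), and tracking the constants $\delta_0$, $\delta_1$, $\delta_2$ so that the exponent appearing in (5) comes out as $\delta_2=(20\tau(k)\log k)^{-1}$ rather than something weaker.
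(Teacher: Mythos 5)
Your proposal is correct and follows essentially the same route as the paper: the paper derives Lemma \ref{lem:7} by attaching to each prime $p=2^md+1$ supplied by Lemma \ref{lem:6} the triple obtained from the Euclidean division $n=qm+r$, verifying $p\mid d^q+(-1)^qk2^r$ and the bound \eqref{eq:important}, exactly as you do. Your explicit remark that the map $p\mapsto(U,V_1,V_2)$ is injective via $U=p-1$ is a point the paper leaves implicit, but it is the right justification.
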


 We next find an upper bound for the number of triples $(U,V_1,V_2)$ with the conditions (1)--(5) of Lemma \ref{lem:7} above
 in terms of $k$ alone.

 \begin{lemma}
 \label{lem:8}
 Assume that
 \begin{equation}
 \label{eq:maxn22}
 n>10^{28} (\log k)^6 \tau(k).
\end{equation}
 Then the number of triples $(U,V_1,V_2)$ with the conditions (1)--(5) of Lemma \ref{lem:7} is at most
 $$
 2^{3\times 121^{3} \tau(k)^2 (\log k)^2 \omega(k)}.
 $$
 \end{lemma}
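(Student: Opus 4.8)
The plan is to count the triples $(U,V_1,V_2)$ in two regimes according to the size of $q$ (the quotient in $n=qm+r$), and to invoke the Quantitative Subspace Theorem (via Corollary \ref{cor}) for the bulk of the count, with Matveev's bound cleaning up a degenerate case. First I would dispose of the triples with $q$ \emph{small}, say $q\le q_0$ for a suitable threshold $q_0$ to be chosen as a power of $\tau(k)$ and $\log k$: here both $V_1=d^q$ and the exponent $r$ range over sets whose sizes are polynomially bounded in $k$, $q_0$, and $\sqrt{n\log k}$, and the prime $p=U+1$ is determined by $d,m$, while $m$ is essentially determined by $n,q,r$; so the number of such triples is trivially at most $\tau(k)\cdot q_0\cdot (m-1)$, which under \eqref{eq:maxn22} is far below the target bound. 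The real work is the case $q>q_0$, which is where the multiplicative structure forces $U=d2^m$ and $V_1+V_2=d^q+(-1)^qk2^r$ to be nontrivially related.

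For the main case I would set up the Subspace Theorem. The divisibility $U+1\mid V_1+V_2$ together with condition (5), $U>|V_1+V_2|^{\delta_2}$, says that the nonzero integer $V_1+V_2$ has a divisor $U+1$ that is a large power of it; writing $V_1+V_2=(U+1)W$ we get $|W|<|V_1+V_2|^{1-\delta_2}$, and since $V_1+V_2$, $U$, $W$ are built from the primes $2$ and the primes dividing $k$ (there are $\omega(k)+1$ of them — call this set of places $\mathcal S$, together with the infinite place), all of $U$, $V_1$, $V_2$, $W$ and $U+1$ up to bounded contributions are $\mathcal S$-units in the sense of Section 4 once we absorb the ``$+1$''. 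The standard device is: the equation $V_1+V_2-(U+1)W=0$ (or, after grouping, an equation of the shape $x_1+x_2+x_3+x_4=0$ in $\mathcal S$-units coming from $d^q$, $(-1)^qk2^r$, $-U W$, $-W$) is an $\mathcal S$-unit equation; I would apply Corollary \ref{cor} with $M$ equal to the number of monomials (a bounded multiple of $\omega(k)$), with the linear forms chosen to have determinant $\pm1$ at each place, and with $\delta$ a small constant multiple of $\delta_2=(20\tau(k)\log k)^{-1}$. The hypothesis (5) is exactly what is needed to verify the gap inequality \eqref{eq:DP1}: the height of the solution vector is comparable to $|V_1+V_2|$, and condition (5) forces the product of the $L_{i,w}$-values to be smaller than a negative power of that height. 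Corollary \ref{cor} then confines the solution vectors to at most $t_1\le(2^{60M^2}\delta^{-7M})^s$ proper linear subspaces, with $s=\omega(k)+1$; inside each subspace one exponent relation is eliminated, so one recurses (or argues directly that on each subspace the remaining $\mathcal S$-unit equation is either a binomial — handled by Matveev — or has boundedly many solutions by Theorem \ref{thm:subspace}). Tracking the constants: $M=O(\omega(k))$, $\delta^{-1}=O(\tau(k)\log k)$, $s=O(\omega(k))$, so $\log t_1=O\big(M^2+M\log\delta^{-1}\big)\cdot s=O\big(\tau(k)^2(\log k)^2\omega(k)\big)$, which after bookkeeping fits under the stated exponent $3\times121^3\tau(k)^2(\log k)^2\omega(k)$.

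The hypothesis \eqref{eq:maxn22}, $n>10^{28}(\log k)^6\tau(k)$, is used precisely to guarantee two things: that $n$ is large enough that the "small $q$" count $\tau(k)q_0(m-1)=O(\mathrm{poly}(\tau(k),\log k)\sqrt{n\log k})$ is genuinely dominated by the subspace-theoretic count (so one does not lose by the trivial bound), and that in applying Matveev to the binomial degenerate cases the resulting linear form in logarithms, whose coefficients $B$ are $O(\sqrt{\tau(k)n})$ by \eqref{eq:boundforq}, yields a contradiction with the size constraints rather than extra solutions — i.e. $\sqrt n$ dominates the Baker bound $\log B\cdot\prod\log\gamma_i=O(\log(\tau(k)n)\cdot(\log k)^{O(\omega(k))})$.

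The main obstacle I expect is the combinatorial bookkeeping in the recursion on subspaces: each application of Corollary \ref{cor} reduces to finitely many lower-dimensional $\mathcal S$-unit equations, but one must check that on every such subspace one of the variables is a fixed $\mathcal S$-unit multiple of another (so the dimension genuinely drops and the binomial/Pontreau dichotomy applies), and that the product over all the branches of the subspace counts, when multiplied together, still has logarithm of order $\tau(k)^2(\log k)^2\omega(k)$ — this is where the explicit shape of \eqref{eq:t1} and \eqref{eq:t3}, together with the choice $\delta\asymp\delta_2$, has to be pushed through carefully to land under the constant $3\times121^3$.
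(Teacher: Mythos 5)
Your overall instinct --- reduce to Corollary \ref{cor} plus a binomial/Pontreau dichotomy on each exceptional subspace, with Matveev handling a degenerate quantity --- matches the architecture of the paper's proof, but the way you set up the Subspace Theorem has a genuine gap at its core. You propose to view $V_1+V_2-(U+1)W=0$, with $W=(V_1+V_2)/(U+1)$, as a four-term $\mathcal S$-unit equation ``once we absorb the $+1$''. But $U+1=p$ is a large prime lying \emph{outside} $\mathcal S=\{p\mid 2k\}\cup\{\infty\}$, and the cofactor $W$ is an arbitrary integer with no $\mathcal S$-unit structure whatsoever; neither can be absorbed. This is exactly the obstruction the actual proof is built to circumvent: one expands $1/(1+U)$ as a truncated geometric series of length $M_1=\lfloor 3/\delta_2\rfloor\approx 60\,\tau(k)\log k$, so that the coordinates of the solution vector become the genuinely $\mathcal S$-unit quantities $V_iU^j$ together with one coordinate divisible by $U^{M_1}$, and condition (5) of Lemma \ref{lem:7} is what makes the truncation error $2|V|/U^{M_1+1}$ small enough to verify \eqref{eq:DP1} with $\delta=1/(M_1+2)$. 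Consequently the correct dimension is $M=2M_1+1\le 121\,\tau(k)\log k$, not ``a bounded multiple of $\omega(k)$'' as you assert; indeed your own bookkeeping is inconsistent, since with $M=O(\omega(k))$ the quantity $(M^2+M\log\delta^{-1})s$ would be $O(\omega(k)^3+\omega(k)^2\log(\tau(k)\log k))$, not $O(\tau(k)^2(\log k)^2\omega(k))$ --- the stated exponent comes precisely from $60M^2s$ with $M\asymp\tau(k)\log k$ and $s=\omega(k)+2$.

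Two secondary problems. First, your ``small $q$'' case is both unnecessary and mishandled: the trivial count $\tau(k)\cdot q_0\cdot(m-1)$ depends on $n$ (through the range of $m$), whereas Lemma \ref{lem:8} must produce a bound depending on $k$ alone --- that is the whole point, since it is to be played off against the lower bound $\sqrt n/(24\sqrt{\log k})$ of Lemma \ref{lem:7} to force $n$ to be bounded. (In fact no split is needed; all triples are treated uniformly.) Second, Matveev's theorem is not used for the binomial degenerate subspaces (those are handled by the multiplicative independence coming from condition (3)); it is used, and this is where hypothesis \eqref{eq:maxn22} actually enters, to prove $|k2^rd^{-q}-1|>U^{-1}$ and hence $\max\{|V_1|,|V_2|\}\le|V|^2$, which is needed to control the height of the solution vector before Corollary \ref{cor} can be applied at all.
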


\begin{proof}
We apply Corollary \ref{cor}. We fix the numbers $k$ and $n$. The finite set of valuations is
$$
{\mathcal S}=\{p\mid 2 k\}\cup \{\infty\},
$$
so $s=\omega(k)+2$, where we recall that $\omega(m)$ is the number of distinct prime factors of the positive integer $m$. The following argument based on the Subspace Theorem is not new. It has appeared before in \cite{BCZ}, \cite{BL}, \cite{CZ1}, \cite{CZ2}, \cite{HL},
and perhaps elsewhere. Recall that
$$
U=d2^m, \qquad V_1=d^q\qquad V_2=(-1)^q k2^r.
$$
Start with
$$
\frac{1}{U+1}=\frac{1}{U(1+1/U)}=\frac{1}{U}\left(1-\frac{1}{U}+\cdots+\frac{(-1)^{M_1-1}}{U^{M_1-1}}+\frac{\zeta_U}{U^{M_1}}\right),
$$
where $M_1$ is a sufficiently large positive integer to be determined later and $|\zeta_U|\le 2$. Thus, we get
$$
\left|\frac{1}{1+U}-\frac{1}{U}+\cdots+\frac{(-1)^{M_1}}{U^{M_1}}\right|<\frac{2}{U^{M_1+1}}.
$$
Multiply the above inequality by $V=V_1+V_2$, to get
$$
\left|\frac{V}{1+U}-\frac{V_1+V_2}{U}+\cdots+\frac{(-1)^{M_1}(V_1+V_2)}{U^{M_1}}\right|\le \frac{2|V|}{U^{M_1+1}}.
$$
Multiply both sides above by $U^{M_1}$ to get
\begin{equation}
\label{eq:subspace}
\left|\frac{VU^{M_1}}{1+U}-V_1U^{M_1-1}-V_2U^{M_1-1}+\cdots+(-1)^{M_1}V_1+(-1)^{M_1}V_2\right|\le \frac{2|V|}{U}.
\end{equation}
We take $M=2M_1+1$ and label the $M$ variables as
$$
{\bf y}=(y_1,\ldots,y_{2M_1+1})=(z,y_{1,M_1-1},y_{2,M_1-1},\ldots,y_{1,0},y_{2,0}).
$$
We take the linear forms to be
$$
L_{1,\infty}({\bf y})=z-y_{1,M_1-1}-y_{2,M_1-1}+\cdots+(-1)^{M_1} y_{1,0}+(-1)^{M_1}y_{2,0}
$$
and $L_{i,w}({\bf y})=y_i$ for $(i,w)\ne (1,\infty)$. It is clear that these forms are linearly independent for every fixed $w\in {\mathcal S}$, and condition \eqref{eq:det} is satisfied for  them. We evaluate the double product
\begin{equation}
\label{eq:DP2}
\prod_{w\in {\mathcal S}} \prod_{i=1}^M |L_{i,w}({\bf y})|_w,
\end{equation}
when $(U,V_1,V_2)$ are as in Lemma \ref{lem:7},
$$
z=\frac{(V_1+V_2)U^{M_1}}{1+U}\quad {\text{\rm and}}\quad y_{i,j}=V_iU^{j}\quad (i=1,2,~j=0,\ldots,M_1-1).
$$
For $i\ge 2$, $y_i$ is an ${\mathcal S}$-unit and $L_{i,w}({\bf y})=y_i$ for all $w\in {\mathcal S}$, so that
\begin{equation}
\label{eq:S1}
\prod_{w\in {\mathcal S}} \prod_{i=2}^M |L_{i,w}({\bf y})|_w=1.
\end{equation}
For $i=1$, since $V/(1+U)\in \Z$, it follows that $z$ is an integer multiple of $U^{M_1}$. Hence,
\begin{equation}
\label{eq:S2}
\prod_{w\in {\mathcal S}\backslash \{\infty\}} |L_{1,w}({\bf y})|_w\le U^{-M_1}.
\end{equation}
Finally, we have
\begin{equation}
\label{eq:S3}
|L_{1,\infty}({\bf y})|_{\infty}\le \frac{2|V|}{U},
\end{equation}
by \eqref{eq:subspace}. Multiplying \eqref{eq:S1}, \eqref{eq:S2} and \eqref{eq:S3}, we get that
\begin{equation}
\label{eq:DP3}
\prod_{w\in {\mathcal S}} \prod_{i=1}^M  |L_{i,w}({\bf y})|_w\le \frac{2|V|}{U^{M_1+1}}.
\end{equation}
Choose $M_1=\lfloor 3/\delta_2\rfloor$. Then we have that $M_1>{2}/{\delta_2}$, therefore
$$
U^{M_1}>U^{2/\delta_2}>|V|^2,
$$
by \eqref{eq:important}. Thus,
\begin{equation}
\label{eq:Vandmax}
\frac{2|V|}{U^{M_1+1}}<\frac{|V|}{U^{M_1}}\le \frac{1}{|V|}.
\end{equation}
We now compare $|V|$ and $|V_i|$ for $i=1,2$. If $q$ is even, then $V=|V_1|+|V_2|$. Assume now that $q$ is odd. Then
\begin{equation}
\label{eq:111}
|V|=|V_1||k2^rd^{-q}-1|.
\end{equation}
By using the inequality of Theorem \ref{thm:Matveev} with $t=3$,
$\gamma_1=k,~\gamma_2=2,~\gamma_3=d,~b_1=1,~b_2=r,~b_3=-q$, we have
that
\begin{equation}
\label{eq:t}
|k2^rd^{-q}-1|>\exp(-c_1 (\log k)^2\log n),
\end{equation}
where we used the fact that $\max\{d,k\}\le k$ and $\max\{r,q\}\le n$, and we can take $c_1=1.4\times 30^6\times 3^{4.5} \times
2\times \log 2$. Let us check that
\begin{equation}
\label{eq:xxx}
|k2^r d^{-q}-1|>U^{-1}.
\end{equation}
For this, since $U>2^m>2^{\delta_0 {\sqrt{n}}}$, it is enough that
$$
\delta_0 (\log 2){\sqrt{n}}>c_1 (\log k)^2 \log n,
$$
which is equivalent to
\begin{equation}
\label{eq:n2} \frac{\sqrt{n}}{\log(\sqrt n)}>c_2(\log
k)^2{\sqrt{\tau(k)}},
\end{equation}
where $c_2=11.2\times 30^6\times 3^{4.5}$. Let us spend some time
unraveling \eqref{eq:n2}. It is  easy to prove that if $A>3$ then
the inequality
$$
\frac{x}{\log(x)}>A\quad {\text{\rm is~ implied ~by}}\quad x>2A\log
A.
$$
Using this argument it follows that it suffices that
\begin{equation}
\label{eq:yyy} \sqrt n>2c_2(\log k)^2\sqrt{\tau(k)}\log\left (
c_2(\log k)^2{\sqrt{\tau(k)}} \right )
\end{equation}
Since $2\log \log k<\log k$, $\tau(k)<k$ and $\log(c_2)<28$, we get
that
$$
\log(c_2)+(\log \tau(k))/2+2\log \log k<28+1.5\log k<11\log k,
$$
where the last inequality follows because $\log k>3$. Hence, in order for \eqref{eq:yyy} to hold, it suffices that
$$
{\sqrt{n}}>22c_2 (\log k)^3 {\sqrt{\tau(k)}},
$$
which is satisfied for
\begin{equation}
\label{eq:maxn2}
n>10^{28} (\log k)^6 \tau(k),
\end{equation}
which is exactly condition \eqref{eq:maxn22}. Since condition \eqref{eq:maxn22} holds, we get that also inequality \eqref{eq:xxx} holds. With \eqref{eq:111}, we get that
$$
|V|=|V_1||k2^rd^{-q}-1|>|V_1|U^{-1},
$$
therefore $|V_1|<|V|U<|V|^2$. A similar argument shows that $|V_2|\le V^2$. Thus, we always have $\max\{|V_1|,|V_2|\}\le |V|^2$ regardless of the parity of $q$.
Hence,
\begin{eqnarray*}
|V_i|U^{M_1-1} & \le & |V|^2 U^{M_1-1}\le |V|^{M_1+1}\qquad (i=1,2);\\
\frac{|V|U^{M_1}}{1+U} & < & |V|U^{M_1-1}<|V|^{M_1+1}.
\end{eqnarray*}
This shows that for our vector ${\bf y}$ we have that
\begin{equation}
\label{eq:max}
\max\{|y_i|:i=1,\ldots,M\}<|V|^{M_1+1}.
\end{equation}
Finally, we have
$$
M=2M_1+1\le \frac{6}{\delta_2}+1<120\tau(k)\log k+1<2^{\delta_0\sqrt
n}<U<|V|.
$$
Indeed, the middle inequality  is equivalent to
$$n>\tau(k)(2\log2)^2\log^2(120\tau(k)\log k+1),$$ which is implied by
\eqref{eq:maxn2}. Thus,
$$
M\max\{|y_i|:i=1,\ldots,M\}<|V|^{M_1+2}.
$$
Comparing \eqref{eq:DP3} with \eqref{eq:Vandmax} and the last estimate above, we get
\begin{equation}
\label{eq:suitable}
\prod_{w\in {\mathcal S}} \prod_{i=1}^M |L_{i,w}({\bf y})|_w\le \frac{2|V|}{U^{M_1+1}}\le \frac{1}{|V|}\le \left(M\max\{|y_i|:i=1,\ldots, M\}\right)^{-\delta},
\end{equation}
where $\delta=1/(M_1+2)$.

We now apply Corollary \ref{cor}  with $H={\sqrt{M}}$. Note that relation
\eqref{eq:HHH} holds for our system  of forms. Let us check the condition
${\mathcal H}({\bf y})\ge 1$ for our ${\bf y}\in \Z^M$. Observe that since the last two coordinates of ${\bf y}$ are $V_1=d^q$ and $V_2=(-1)^q k 2^r$, it follows that $|{\bf y}|_2=1$ and
$$
\prod_{w\in {\mathcal S}\backslash \{2,\infty\}} |{\bf y}|_w\ge k^{-1}.
$$
Thus, taking into account just the contribution of $y_3=V_2 U^{M_1-1}$ to $|{\bf y}|_{\infty}$, we get that
\begin{eqnarray*}
{\mathcal H}({\bf y}) & \ge &  |y_3|_{\infty} \prod_{w\in {\mathcal S}\backslash \{\infty\}} |{\bf y}|_w\ge V_2 U^{M_1-1} k^{-1}\\
& \ge &  U^{M_1-1}\ge
6^{M_1-1}>{\sqrt{2M_1+1}},
\end{eqnarray*}
where the last inequality holds for all $M_1\ge 2$, which is certainly the case for us since
$M_1>2/\delta_2=40\tau(k)\log k>80$. Hence, all conditions from
Corollary \ref{cor} are satisfied. We get that all solutions ${\bf y}$ of our
problem lie in $t_1$ proper subspaces of $\Q$, where $t_1$ is
bounded as in \eqref{eq:t1}.

Let us take such a subspace. We then get an equation of the form
\begin{equation}
\label{eq:TTT}
d_0\left(\frac{V_1+V_2}{1+U}\right) U^{M_1}+\sum_{i=1}^{2}\sum_{j=0}^{M_1-1} c_{i,j} V_iU^{j}=0
\end{equation}
for some vector of coefficients
$$
(d_0,c_{i,j}:1\le i\le 2, 0\le j\le M_1-1)\in \Q^{M}
$$
not all zero. We divide across equation \eqref{eq:TTT} by $V_1U^{-M_1}$. Further, by setting $W=V_2/V_1=(-1)^qk2^r d^{-q}$, we arrive at
$$
d_0\frac{W+1}{U+1}+\sum_{i=1}^2 \sum_{j=0}^{M_1-1} c_{i,j} W^{i-1} U^{-(M_1-j)}=0.
$$
The last equation above is a rational function in the pair $(U,W)$, which is nonzero as a rational function (this has been checked in many places,
like \cite{BCZ}, or \cite{CZ2}, for example). Clearing the denominator $1+U$, we arrive at an equation of the form
\begin{equation}
\label{eq:poly}
\sum_{i=0}^1 \sum_{j=0}^{M_1} e_{i,j} W^i U^{-j}=0
\end{equation}
for some coefficients $(e_{i,j}:0\le i\le 1,0\le j\le M_1)\in \Q^{M}$, not all zero. Put $U_1=U^{-1}$. The above equation \eqref{eq:poly} is of the form
$$
WP(U_1)+Q(U_1)=0,
$$
where $P(X)$ and $Q(X)$ are in $\Q[X]$ of degrees at most $M_1$. We distinguish a few cases.

When $P(X)=0$, then $Q(X)\ne 0$. Then
$U_1$ has at most $M_1$ values, therefore $m$ is determined in at most $M_1$ ways.

A similar argument works when $Q(X)=0$.

Assume now that
none of $P(X)$ and $Q(X)$ is the constant zero polynomial. Put
$$
F(X,Y)=YP(X)+Q(X).
$$
Then any solution $(U,W)$ to equation \eqref{eq:poly} leads to a solution to the equation $F(U_1,W)=0$. Assume next that $F(X,Y)$ is a binomial polynomial.
It then follows that $P(X)=c_1X^{f_1}$ and $Q(X)=c_2X^{f_2}$ for some nonzero rational coefficients $c_1,c_2$ and some
nonnegative integer exponents $f_1,f_2$. Then since $F(U_1,W)=0$, it follows that $WU^{f_2-f_1}=-c_2/c_1$ is uniquely determined. To recover $W$ and $U$ uniquely,  we need to check that $W$ and $U$ are multiplicatively independent. If they were not, we would have integers $\lambda$ and $\mu$ not both zero such that
$$
|W|^{\lambda}=k^{\lambda}2^{r\lambda} d^{-q\lambda}=d^{\mu}2^{m\mu}=U^{\mu}.
$$
Hence, we get that $r\lambda-m \mu=0$, and that $k^{\lambda}=d^{\mu+\lambda}$. If $\lambda=0$, we then get that $d^{\mu}=1$, so $\mu=0$, therefore
$(\lambda,\mu)=0$, which is false. Thus, $\lambda\ne 0$. This leads easily to the conclusion that $2^n k$ and $2^m d$ are multiplicatively dependent (in fact, we get the relation $(2^m d)^{\mu+q\lambda}=(k2^n)^\lambda$), which is not the case.
Thus, when $F(X,Y)$ is a binomial polynomial, then there is at most one convenient solution to $F(U_1,W)=0$.

Assume now that $F(X,Y)$ has at least three nonzero coefficients. Write $P(X)=X^{f_1} P_1(X)$ and $Q(X)=X^{f_2}Q_1(X)$, where $f_1,~f_2$ are nonnegative integer exponents, and $P_1(X)$ and $Q_1(X)$ are polynomials in $\Q[X]$ with
$P_1(0)Q_1(0)\ne 0$. Replace $F(X,Y)$ by
$$
\frac{F(X,Y)}{X^{\min\{f_1,f_2\}}}=YX^{f_1-\min\{f_1,f_2\}} P_1(X)+X^{f_2-\min\{f_1,f_2\}} Q_1(X).
$$
Then any solution $(U,W)$ to equation \eqref{eq:poly} still satisfies $F(U_1,V)=0$ with this new $F(X,Y)$ (because $U_1\ne 0$). Furthermore,
$F(X,Y)$ is now irreducible over $\C[X,Y]$ because it is not divisible by neither $X$ nor $Y$ and it is linear in $Y$. Its degree $D$ satisfies
$$
D\le \max\{1+{\rm deg}(P_1(X), {\rm deg}(P_2(X)\}\le M_1+1<M.
$$
But then, by Theorem \ref{thm:subspace}, the number of solutions $(U,W)$ is at most
\begin{equation}
\label{eq:t3new} t_2\le 2^{104s+51} M^{6s+3} (\log(M+2))^{10s+6}.
\end{equation}
Recall that $s=\omega(k)+2$. Note that $U$ determines uniquely $d$ and $m$, which in turn determine also $q$ and $r$ uniquely by \eqref{eq:divwrem}.
To summarize, we get that for fixed $n$ satisfying \eqref{eq:maxn2} and odd $k\ge 3$, the number of triples $(U,V_1,V_2)$ with the conditions (1)--(5) of Lemma \ref{lem:7} is at most
$$
t_1t_2,
$$
where $t_1$ and $t_2$ are shown at \eqref{eq:t1} and
\eqref{eq:t3new}, respectively. We now bound $t_1$ and $t_2$ for our
application.

Note that since $\delta^{-1}=M_1+2,\ M=2M_1+1$  and $M_1=\lfloor
3/\delta_2\rfloor$, we get easily that
\begin{eqnarray}\delta^{-1}&=&(M+3)/2\nonumber\\
M & \le & \frac{6}{\delta_2}+1\le 121 \tau(k){\log k},\label{omega}\\
s & = & \omega(k)+2\le 3\omega(k).\label{s}
\end{eqnarray}
Therefore
\begin{eqnarray}
\label{eq:boundfort1}
t_1 & < & (2^{60M^2} \delta^{-7M})^s\nonumber\\
& < & (2^{60M^2}\left ((M+3)/2\right )^{7M})^s\nonumber;
\end{eqnarray}
and since $s\ge 3$,
\begin{eqnarray}
\label{eq:boundont3} t_2 & < & 2^{104s+51}
M^{6s+3}(\log(M+2))^{10s+6}\\&<& \left ( 2^{221}M^7
\log^7(M+2)\right )^s\nonumber.
\end{eqnarray}
Hence,
\begin{eqnarray}
t_1t_2&<&\left (2^{60M^2\left (1+\frac 1{60}\left
(\frac{7\log((M+3)/2)}{(\log 2)M} +\frac{221}{M^2}+\frac{7\log
M}{(\log 2)M^2}+\frac{7\log \log(M+2)}{(\log 2)M^2}\right )\right )}
\right )^s\nonumber \\
&<&2^{61sM^2}\label{t1t3}
\end{eqnarray}
provided the quantity
$$E(M)=\frac{7\log((M+3)/2)}{(\log 2)M}
+\frac{221}{M^2}+\frac{7\log M}{(\log 2)M^2}+\frac{7\log
\log(M+2)}{(\log 2)M^2}
$$
satisfies $E(M)<1$. We observe that
\begin{eqnarray*}
M & = & 2M_1+1=2 \lfloor 3/\delta_2\rfloor +1=2\lfloor 60\tau(k)\log
k\rfloor +1\\
& \ge & 2\lfloor 60\times 2\times\log (27)\rfloor +1 = 791
\end{eqnarray*}
and
certainly, $E(M)<1$ for $M\ge 791$.

Finally, putting \eqref{omega} and \eqref{s} in \eqref{t1t3} we get
$$t_1t_2<2^{3\times 121^3\omega(k)\tau^2(k)\log^2k}.$$
\end{proof}

Theorem \ref{thm:1} follows now from Lemmas \ref{lem:7} and \ref{lem:8}. Indeed, observe first that inequality \eqref{eq:maxn22} implies inequality \eqref{eq:maxn}. Next, assuming that inequality \eqref{eq:maxn22}, the conclusion of Lemmas \ref{lem:7} and \ref{lem:8} is that
\begin{eqnarray*}
n & < & 24^2 (\log k) 2^{6\times  121^{3} \tau(k)^2 (\log k)^2 \omega(k)}\\
& < & 2^{2\times  10^{7} \tau(k)^2 (\log k)^2 \omega(k)},
\end{eqnarray*}
where we have used that $24^2 (\log k)<2^{\tau(k)^2 (\log k)^2
\omega(k)}$ for $k\ge 27$.

 So, to finish, it suffices to prove that
$$
2^{2 \times 10^7 \tau(k)^2 (\log k)^2 \omega(k)}>10^{28} (\log k)^6
\tau(k),
$$
which follows since $2^x>x^4$ for $x>100$ with
$$
x=2\times 10^7
\tau(k)^2 (\log k)^2 \omega(k).
$$

\section{The proof of Theorem \ref{thm:2}}

We have to show that if $k\le 25$ is odd, then there is no Carmichael number of the form $2^n k+1$. We distinguish five cases, according to whether $k$ is prime, or $k\in \{9,15,21,25\}$.

\subsection{$k\le 23$ is prime}

By Lemma \ref{lem:1}, we have that if $p$ is a Fermat prime factor of $N=2^n k+1$, then $p< k^2\le 23^2$, therefore $p\in \{3,5,17,257\}$.
By the Main Theorem 2 in \cite{Wr}, we get that there are only seven possibilities for $N$, namely
\begin{eqnarray}
\label{eq:list}
N & \in & \left\{5\times 13\times 17, 5\times 13\times 193\times 257, 5\times 13\times 193\times 257\times 769,\right. \\
&& \left. 3\times 11\times 17, 5\times 17\times 29, 5\times 17\times 29\times 113, 5\times 17\times 257\times 509\right\}.\nonumber
\end{eqnarray}
There is another possibility listed in \cite{Wr}, namely
$$
N=5\times 29\times 113\times 65537\times 114689,
$$
which is not convenient for us since $65537$ is a Fermat number exceeding $23^2$. However, no number from list \eqref{eq:list} is of the form $2^n k+1$ for some odd prime $k\le 23$.

\subsection{Preliminary remarks about the cases $k\in \{9,15,21,25\}$}

We first run a search showing that there is no Carmichael number of the form $2^n k+1$ for all $n\in \{1,\ldots,256\}$. Suppose now that $n>256$. Write
$$
2^n k+1=\prod_{i=1}^s (2^{m_i} d_i+1)
$$
where $1\le m_i\le n$, $d_i\mid k$ for $i=1,\ldots,s$ and $p_i=2^{m_i} d_i+1$ is prime for all $i=1,\dots,s$. We assume that the primes are listed in such a way that
$$
a=m_1\le m_2\le \cdots.
$$
We first show that $n>a+20$. Indeed, assume that this is not so. If $p_1$ is a Fermat prime, then, by Lemma \ref{lem:1}, we have $a\le (\log k)/\log 2<5$, so $n\le a+20\le 25$, which is false. If $2^n k$ and $2^{m_1} d_1$ are multiplicatively dependent, then Lemma \ref{lem:2} shows that $a\le n/3$. Thus, $n\le a+20\le n/3+20$, therefore $n\le 30$, which is again false. Finally, assume that $d_1>1$ and $2^{m_1} d_1$ and $2^n k $ are multiplicatively dependent. Then Lemma \ref{lem:3} shows that $a=m_1<7{\sqrt{n\log k}}<14{\sqrt{n}}$ because $3\log k\le 3\log 27<12<n$. Thus, $n<14{\sqrt{n}}+20$, which is impossible for $n\ge 256$. So, indeed $n>a+20$. From this, we conclude that if we put $b_i$
such that
$$
b_i=\nu_2(p_1p_2\cdots p_i-1)
$$
for $i=1,2,\ldots,s-1$ and $b_i\le a+20$, then $a_{i+1}\le b_i$. This argument will be used in what follows without further referencing.

\subsection{$k=9$}

If $p$ is a Fermat number dividing $N$, then $p\le 9^2=81$ by Lemma \ref{lem:1}, so $p\in \{3,5,17\}$. Clearly, $3\nmid 2^n\cdot 9+1$ for any $n\ge 1$, therefore $p\in \{5,17\}$. We now write
$$
2^n \cdot 9+1=\prod_{i=1}^s (2^{a_i}+1)\prod_{i=1}^t (2^{b_i}\cdot 3+1)\prod_{i=1}^{u} (2^{c_i} \cdot 9+1),
$$
where $a_1<\cdots<a_s,~b_1<\cdots<b_t,~c_1<\cdots<c_u$. It is easy to see that $a_1,b_1,c_1$ cannot be all three distinct. Let
$a=\min\{a_1,b_1,c_1\}$. We do a case by case analysis according to the number $a$.

If $a=1$, the possibilities are
that two of $3,7,19$ divide $N$. As we have seen, $3\nmid N$, so both $7$ and $19$ divide $N$. However, $7$ never divides $2^n\cdot 9+1$, which is a contradiction.

If $a=2$, then two of $5,~13,~37$ divide $N$. However, $5\mid N$ implies $n\equiv 0\pmod 4$. Similarly, $13\mid N$ implies
$n\equiv 10\pmod {12}$, while $37\mid N$ implies $n\equiv 2\pmod {36}$, and no two such congruences can simultaneously hold.

If $a=3$, then $2^3\cdot 3+1=25$ is not prime, and we get a contradiction.

If $a=4$, then neither one of $2^4\cdot 3+1=49=7^2$ or $2^4\cdot 9+1=145=5\times 29$ is prime, again a contradiction.

Thus, $a\ge 5$. In particular, $s=0$, and $b_1=c_1$. Put $p_1=2^a \cdot 3+1$ and $p_2=2^a \cdot 9+1$.
For an odd prime $p$ we put ${\text{\rm ord}}_p(2)$ for the multiplicative order of $2$ modulo $p$.
Then ${\text{\rm ord}}_2(p_i)=2^{\alpha_i}\cdot \delta_i$, where $\alpha_i\le a$ and $\delta_i\in \{1,3,9\}$ for $i=1,2$. The congruences
$$
2^n\cdot 9\equiv -1\pmod {p_1}\qquad {\text{\rm and}}\qquad 2^{2a} \cdot 9\equiv 1\pmod {p_1}
$$
imply $2^{n-2a}\equiv -1\pmod {p_1}$, which implies that ${\text{\rm ord}}_{p_1}(2)\mid 2n-4a$, therefore
$2n\equiv 4a \pmod {2^{\alpha_1}}$. Similarly, from the congruences
$$
2^n \cdot 9\equiv -1\pmod {p_2}\quad {\text{\rm and}}\quad 2^{a}\cdot 9\equiv -1\pmod {p_2},
$$
we get $2^{n-a}\equiv 1\pmod {p_2}$, so $n\equiv a\pmod {2^{\alpha_2}}$, or $4n\equiv 4a\pmod {2^{\alpha_2}}$. Thus, putting
$\alpha=\min\{\alpha_1,\alpha_2\}$, we get that $2n\equiv 4a\pmod {2^{\alpha}}$ and also $4n\equiv 4a\pmod {2^{\alpha}}$, therefore
$2n\equiv 0\pmod {2^{\alpha}}$. In particular, $2^{\alpha}\cdot 9\mid 18n$, showing that one of the numbers  $p_1$ or $p_2$ divides
$2^{18n}-1$. Since
$$
p_i\mid 2^n\cdot 9+1\mid 2^{18n} \cdot 9^{18}-1=(2^{18n}-1)9^{18}+(9^{18}-1)
$$
for both $i=1,2$, we get that one of $p_1$ or $p_2$ divides
$$
9^{18}-1=2^4\cdot 5\cdot 7\cdot 13\cdot 19\cdot 37\cdot 73\cdot 757\cdot 530713.
$$
However, none of the primes appearing in the right hand side above is of the form $2^a \cdot 3+1$ for some $a\ge 5$, which completes the argument in this case.

\subsection{$k=15$}

If $p$ is a Fermat number dividing $N$, then $p<15^2$, therefore $p\in \{3,5,17\}$. Clearly, it is not possible that $3\mid 2^n\cdot 15+1$
or $5\mid 2^n\cdot 15+1$ for any $n\ge 1$, so only $p=17$ is possible. We write
$$
2^n\cdot 15+1=\prod_{i=1}^s (2^{m_i} d_i+1),
$$
where $s\ge 2$, $d_i\mid 15$ for $i=1,\ldots,s$ and $p_i=2^{m_i} d_i+1$ is prime for all $i=1,\ldots,s$. We put again
$a=\min\{m_i:i=1,\ldots,s\}$.  Then $p_1=2^a d_1+1$ and $p_2=2^a d_2+1$ are both prime divisors of
$N$ for two different divisors $d_1$ and $d_2$ of $15$.   We again do a case by case analysis according to the values of $a$.

If $a=1$, then $p_1,~p_2\in \{7,11,31\}$. However, $7\nmid 2^n\cdot 15+1$ for any $n\ge 1$, therefore both $11$ and $31$ divide
$N$. However, $11\mid N$ implies that $n\equiv 3\pmod {10}$, while $31\mid N$ implies that $n\equiv 1\pmod 5$, and these two congruences
are contradictory.

Assume next that $a=2$. Since $2^2\cdot 5+1=21=3\times 7$ is not prime, it follows that the only possibility is that both
$13$ and $61$ divide $N$. However, the condition $13\mid N$ implies that $n\equiv 5\pmod {12}$, whereas $61\mid N$ implies that
$n\equiv 2\pmod {60}$, and again the last two congruences for $n$ are contradictory.

The case $a=3$ is not possible since neither $2^3\cdot 3+1=25=5^2$ nor $2^3\cdot 15+1=121=11^2$ is prime.

Assume now that $a=4$. Since $2^4\cdot 3+1=49=7^2$ and $2^4\cdot 5+1=81=3^4$, it follows that the only possibility is that both
$17$ and $241$ divide $N$.  However, the condition $17\mid N$ implies that $n\equiv 7\pmod 8$, whereas $241\mid N$ implies that
$n\equiv 4\pmod {24}$, and these last congruences are again contradictory.

The case $a=5$ is also impossible since none of $2^5\cdot 5+1=161=7\times 23$ and $2^5\cdot 15+1=13\times 37$ is prime.

So, from now on $a_i\ge 6$ for all $i=1,\ldots,s$. Let $p=2^b d+1$ for some $b\ge 6$. Assume that $d=5$. Since $p\equiv 1\pmod 8$, it follows that $(-1/p)=(2/p)=1$, where the above notation is the Legendre symbol. Since $5\equiv -2^{-b}\pmod p$, it follows that $(5/p)=1$. Since
$3\equiv -2^{-n} \times 5^{-1} \pmod p$, it follows that $(3/p)=1$, therefore, by quadratic reciprocity, $(p/3)=1$, therefore $p\equiv 1\pmod 3$. However, $2^b\cdot 5+1$ is never $1 \pmod 3$ for any positive integer $b$. This shows that $d\ne 5$. In particular, $d\in \{3,15\}$
for all prime factors $p$ of $N$. Assume next that $d=3$. By a similar argument, we have $(-1/p)=(2/p)=(3/p)=1$ and now the condition
$5\equiv -2^{-n} \times 3^{-1}\pmod p$ implies that $(5/p)=1$, which, via quadratic reciprocity, implies that $p\equiv 1,4\pmod  5$. Since also $p=2^b \cdot 3+1$, it follows easily that $b\equiv 0\pmod 4$ (for the values of $b$ congruent to $1,2,3$ modulo $4$ we get that
$2^b\cdot 3+1$ is congruent to $2,3,0$ modulo $5$, respectively, none of which is convenient). Since when $b\equiv 1\pmod 3$, we have $2^b\cdot 3+1$ is a multiple of $7$, we get that $b\equiv 0,2\pmod 3$, which together with the fact that $b\equiv 0\pmod 4$, leads to $b\equiv
0,~8\pmod {12}$.

Suppose first that $a\equiv 0\pmod {12}$. It then follows that the smallest $b>a$ such that $2^b\cdot 3+1$ is a prime factor of $N$ is
$b\ge a+8$. Write $p_1=2^a\cdot 3+1$ and $p_2=2^a \cdot 15+1$. Then
$$
p_1p_2=1+2^{a+1}(9+2^{a-1} \cdot 45)
$$
is a divisor of $N$. So, $p_3=2^{a+1} \cdot 15+1$ is also a divisor of $N$. Thus,
\begin{eqnarray*}
p_1p_2p_3 & = & (1+2^{a+1}(9+2^{a-1} 45))(1+2^{a+1} \cdot 15)\\
& = & 1+2^{a+1}(24+2^{a-1}\cdot 45)+2^{2a+2} \cdot 15(9+2^{a-1}\cdot 45)\\
& = &
1+2^{a+4}(3+2^{a-4} M_1)
\end{eqnarray*}
is a divisor of $N$, where $M_1$ is some odd integer. Thus, $p_4=2^{a+4}\cdot 15+1$ is also a prime factor of $N$. We then have
\begin{eqnarray*}
p_1p_2p_3p_4 & = & (1+2^{a+4}(3+2^{a-4}\cdot M_1))(1+2^{a+4}\cdot 15)\\
& = & 1+2^{a+4}(18+2^{a-4} M_2)\\
& = & 1+2^{a+5}(9+2^{a-5} M_2),
\end{eqnarray*}
where $M_2$ is some odd integer. Thus, $p_5=2^{a+5} \cdot 15+1$ is also a prime factor of $N$. However, since
$a\equiv 0\pmod {12}$, it follows that $a+5\equiv 5\pmod {12}$, which implies that $p_5\equiv 0\pmod {13}$, a contradiction.

Assume next that $a\equiv 8\pmod {12}$. Since $2^8\cdot 15+1=3841=23\times 167$ is not prime, it follows that $a\ge 20$.
We take again $p_1=2^a\cdot 3+1$ and $p_2=2^a\cdot 15+1$. Then
$$
p_1p_2=1+2^{a+1}(9+2^{a-1}\cdot 45)
$$
is a divisor of $N$. Thus, $p_3=2^{a+1}\cdot 15+1$ is a divisor of $N$ and
\begin{eqnarray*}
p_1p_2p_3 & = & (1+2^{a+1}\cdot 15)(1+2^{a+1}(9+2^{a-1}\cdot 45))\\
& = & 1+2^{a+1}(24+2^{a-1} M_1)\\
& = & 1+2^{a+4}(3+2^{a-4}M_1)
\end{eqnarray*}
is a divisor of $N$ for some odd integer $M_1$. Since $a+4\equiv 0\pmod {12}$, it follows that either $2^{a+4}\cdot 3+1$ is a divisor of
$N$ or $2^{a+4}\cdot 15+1$ is a divisor of $N$ but not both. In the first case, $p_4=2^{a+4}\cdot 3+1$ and
$$
p_1p_2p_3p_4=(1+2^{a+4}\cdot 3)(1+2^{a+4}(3+2^{a-4} M_1))=1+2^{a+5}(3+2^{a-5} M_2)
$$
is a divisor of $N$ for some odd integer $M_2$, while in the second case we have $p_4=2^{a+4}\cdot 15+1$ and
$$
p_1p_2p_3p_4=(1+2^{a+4}\cdot 15)(1+2^{a+4}(3+2^{a-4} M_1))=1+2^{a+5}(9+2^{a-5} M_2)
$$
is a divisor of $N$ again for some odd integer $M_2$. In both cases, we conclude that $p_5=2^{a+5}\cdot 15+1$ divides $N$ and
$$
p_1p_2p_3p_4p_5=(1+2^{a+5}\cdot 15)(1+2^{a+5}(T+2^{a-5} M_2))
$$
is a divisor of $N$ for some $T\in \{3,9\}$. We thus get that
$$
p_1p_2p_3p_4p_5\quad {\text{\rm equals}}\quad 1+2^{a+6}(9+2^{a-6} M_3)\quad {\text{\rm or}}\quad 1+2^{a+8}(3+2^{a-8} M_3)
$$
according to whether $T=3$ or $T=9$, respectively. In the first case, we have that $p_6=2^{a+6}\cdot 15+1$ divides $N$, whereas in the second case
$p_6=2^{a+8}\cdot 15+1$ divides $N$. Observe that
$$
p_1\cdots p_6=(1+2^{a+6}(9+2^{a-6} M_3))(1+2^{a+6} \cdot 15)=1+2^{a+9} (3+2^{a-9} M_4)
$$
for some odd integer $M_4$ in the first case, whereas
$$
p_1\cdots p_6=(1+2^{a+8}(3+2^{a-8} M_3))(1+2^{a+8}\cdot 15)=1+2^{a+9} (9+2^{a-9} M_4)
$$
in the second case. In either case, $p_7=2^{a+9}\cdot 15+1$ is a divisor of $N$. However, since $a\equiv 8\pmod {12}$, it follows that
$a+9\equiv 5\pmod {12}$, so $p_7$ is a multiple of $13$, which is a contradiction.

\subsection{$k=21$}

If $p$ is a Fermat factor of $N$, then $p<21^2$, therefore $p\in \{3,5,17,257\}$. Clearly, it is not possible that $3\mid 2^n\cdot 21+1$.
One also checks that $257\nmid 2^n\cdot 21+1$ for any $n\ge 1$, so only $p=5,17$ are possible. We write
$$
2^n\cdot 21+1=\prod_{i=1}^s (2^{m_i} d_i+1),
$$
where $s\ge 2$, $d_i\mid 21$ for $i=1,\ldots,s$ and $p_i=2^{m_i} d_i+1$ is prime for all $i=1,\ldots,s$. We put again
$a=\min\{m_i:i=1,\ldots,s\}$.  Then $p_1=2^a d_1+1$ and $p_2=2^a d_2+1$ are both prime divisors of
$N$ for two different divisors $d_1$ and $d_2$ of $21$.   We again do a case by case analysis according to the values of $a$.

When $a=1$, we get that two of $2+1,~2\cdot 3+1,~2\cdot 7+1,~2\cdot 21+1$ are prime factors of $N$, which is impossible because $2+1=3$ and $2\cdot 3+1=7$ cannot divide $N$ while $2\cdot 7+1=15=3\times 5$ is not prime.

When $a=2$, we get that two of $2^2+1,~2^2\cdot 3+1,~2^2\cdot 7+1,~2^2\cdot 21+1$. Since $85=5\times 17$ is not prime, it follows that
$N$ is divisible by two of $\{5,13,29\}$. If $5\mid N$, then $n\equiv 2\pmod 4$. If $13\mid N$, then $n\equiv 3\pmod {12}$, whereas if
$29\pmod N$, then $n\equiv 25\pmod {28}$, and no two of the above congruences are simultaneously possible (the last two imply that $n\equiv 3\pmod 4$  and $n\equiv 1\pmod 4$, respectively).

The case $a=3$ is not possible since neither $2^3\cdot 3+1=25=5^2$ nor $2^3\cdot 7+1=57=3\times 19$ is prime.

{From} now on, $a\ge 4$. Let $p=2^b d+1$ be a prime factor of $N$. Let us show that $d$ cannot be $7$. Assume that it is.
Since $b\ge 4$, it follows that $(-1/p)=(2/p)=1$, and since $7\equiv -2^{-b}\pmod p$, it follows that $(7/p)=1$. Since also
$3\equiv -2^{-n}\times 7^{-1}\pmod p$, it follows that $(3/p)=1$, so, by quadratic reciprocity, $p\equiv 1\pmod 3$. However, $2^b\cdot 7+1$ is never congruent to $1$ modulo $3$, which is a contradiction. Hence, $d\in \{1,3,21\}$. Further, suppose that $d=3$. Then,
by the same argument, $(-1/p)=(2/p)=1$ and so $3\equiv -2^{-b}\pmod p$, therefore $(3/p)=1$. Since also
$7\equiv -2^{-n}\times 3^{-1}\pmod p$, we get that $(7/p)=1$, which, by quadratic reciprocity, implies that $(p/7)=1$.
Since $p=2^b\cdot 3+1$, it follows that $b\equiv 0\pmod 3$ (for $b$ congruent to $1,~2$ modulo $3$ we get that $p$ is congruent to
$0,~6$ modulo $7$, and none of these possibilities is convenient). Further, in this same instance, it is clear that we cannot have $b\equiv 3\pmod 4$, since it would lead to $p=2^b\cdot 3+1$ being a multiple of $5$. Hence, $b\equiv 0,1,2\pmod 4$, which together with
$b\equiv 0\pmod 3$, leads to $b\equiv 0,6,9\pmod {12}$.

Assume now that $a=4$. Since $2^4\cdot 3+1=49=7^2$, it follows that the only possibility is that both $17$ and $337$ divide
$N$. The condition $17\mid N$ implies that $n\equiv 2\pmod 8$ while the condition that $337\mid N$ implies that $n\equiv 4\pmod {21}$.
The above conditions imply that $n\equiv 130\pmod {168}$. Further
$$
17\times 337=5729=1+2^5\times 179
$$
is a divisor of $N$. It follows that $N$ is divisible by one of $1+2^5\cdot 3=97$ or $1+2^5\cdot 21=673$. However, there is no $n\ge 0$
such that $97\mid 2^n\cdot 21+1$. Further, $673\mid N$ implies that $n\equiv 5\pmod {48}$, which is incompatible with
$n\equiv 130\pmod {168}$ since the first one means that $n\equiv 2\pmod 3$, whereas the second one means that $n\equiv 1\pmod 3$.

So, from now on we have that $a\ge 5$. Thus, $p_1=2^a\cdot 3+1$ and $p_2=2^a\cdot 21+1$. As we have seen, $a\equiv 0\pmod 3$.
It is also easy to see that $a\equiv 0,1\pmod 4$, otherwise one of $2^a \cdot 3+1$ or $2^a\cdot 21+1$ is a multiple of $5$. Thus,
$a\equiv 0,9\pmod {12}$.

Now
$$
p_1p_2=(1+2^a\cdot 3)(1+2^a\cdot 21)=1+2^a(3+21)+2^{2a} \cdot 63=1+2^{a+3}(3+2^{a-3}\cdot 63).
$$
Assume first that $a\equiv 0\pmod {12}$. Then the next prime factor of $N$ of the form $p=2^b \cdot 3+1$ must have $b\equiv 0,6,9\pmod {12}$, therefore $b\ge a+6$, so $p_3=2^{a+3}\cdot 21+1$ must divide $N$. However, since $a\equiv 0\pmod {12}$, it follows that $p_3$ is a multiple of $13$. Assume next that $a\equiv 9\pmod {12}$. In particular, $a\ge 9$. In fact, since $2^9 \cdot 3+1=29\times 53$ is not prime, it follows that $a\ge 21$.
Then none of $2^{a+1}\cdot 3+1$ and $2^{a+2}\cdot 3+1$ are prime factors of $N$ since $a+1$ and $a+2$ are not multiples of
$3$. Thus, none of $2^{a+1}\cdot 21+1$ and $2^{a+2}\cdot 21+1$ is a prime factor of $N$ either. Hence,
exactly one of $2^{a+3}\cdot 3+1$ or $2^{a+3}\cdot 21+1$ is a prime factor of $N$. Assume  that it is $p_3=2^{a+3}\cdot
21+1$. Then
$$
p_1p_2p_3=(1+2^{a+3}(3+2^{a-3}\cdot 69))(1+2^{a+3}\cdot 21)=1+2^{a+6}(3+2^{a-6} M_1)
$$
for some odd integer $M_1$. Since $a+4$ and $a+5$ are not multiples of $3$, it follows that none of $2^{a+3} \cdot 3+1$ or $2^{a+4} \cdot3+1$ are factors of $N$, therefore $2^{a+3}\cdot 21+1$ and $2^{a+4}\cdot 21+1$ are not factors of $N$ either. Hence, one of
$2^{a+6}\cdot 3+1$ or $2^{a+6}\cdot 21+1$ is a prime factor of $N$. Since $a+6\equiv 3\pmod{12}$ it follows that the first one
cannot be a prime factor of $N$, whereas the second one is a multiple of $13$ so it cannot be prime. So, assume that $p_3=2^{a+3}\cdot
3+1$. Then
$$
p_1p_2p_3=(1+2^{a+3}(3+2^{a-3}\cdot 69)(1+2^{a+3}\cdot 3)=1+2^{a+4}(3+2^{a-4} M_1)
$$
for some odd integer $M_1$. Since $a+4$ is not a multiple of $3$, it follows that $2^{a+4}\cdot 3+1$ is not a prime factor of $N$,
and so $p_4=2^{a+4} \cdot 21+1$ is a prime factor of $N$.
Observe that
$$
p_1p_2p_3p_4=(1+2^{a+4}(3+2^{a-4} M_1)(1+2^{a+4}\cdot 21)=1+2^{a+7}(3+2^{a-7} M_2)
$$
for some odd integer $M_2$. Next, $2^{a+5}\cdot 3+1$ are $2^{a+6}\cdot 3+1$ are not prime factors of $N$ because $a+5$ and $a+6$ are congruent to $2,3\pmod {12}$, so $2^{a+5}\cdot 21+1$ and $2^{a+6}\cdot 21+1$ are not prime factors of $N$ either. Thus,
one of $2^{a+7}\cdot 3+1$ and $2^{a+7}\cdot 21+1$ is a prime factor of $N$, and since $a+7$ is not a multiple of $3$, it follows that
$p_4=2^{a+7}\cdot 21+1$ is a prime factor of $N$. Now
$$
p_1p_2p_3p_4=(1+2^{a+7}(3+2^{a-7} M_2)(1+2^{a+7}\cdot 21)=1+2^{a+10}(3+2^{a-10} M_3)
$$
for some odd integer $M_3$. Since $a+8$ is not a multiple of $3$, it follows that $2^{a+8}\cdot 3+1$ does not divide $N$, therefore
$2^{a+8}\cdot 21+1$ does not divide $N$ either. If $2^{a+9}\cdot 3+1$ is a prime factor of $N$, then $2^{a+9}\cdot 21+1$ is a prime factor of $N$ also, but since $a\equiv 9\pmod {12}$, it follows that $a+9\equiv 2\pmod 4$, therefore $2^{a+9}\cdot 21+1$ is in fact a multiple of $5$. Thus,
none of $2^{a+9}\cdot 3+1$ or $2^{a+9}\cdot 21+1$ is a prime factor of $N$. Since $a+10$ is not a multiple of $3$, we get that
$2^{a+10}\cdot 3+1$ cannot be a prime factor of $N$. Thus, $p_5=2^{a+10}\cdot 21+1$ is a prime factor of $N$. Thus,
$$
p_1\cdots p_5=(1+2^{a+10}(3+2^{a-10} M_3))(1+2^{a+10} \cdot 21)=1+2^{a+13} (3+2^{a-13} M_4)
$$
is a divisor of $N$ for some odd integer $M_4$. Since $a+11$ is not a multiple of $3$, it follows that $2^{a+11}\cdot 3+1$
is not a prime factor of $N$. Therefore $2^{a+11}\cdot 21+1$ is not a prime factor  of $N$ either. As for $a+12$, it follows that either both $p_6=2^{a+12}\cdot 3+1$ and $p_7=2^{a+12}\cdot 13+1$ are prime factors of $N$, or none of them is. If both of them are, then
$$
p_6p_7=(1+2^{a+12}\cdot 3)(1+2^{a+12}\cdot 21)=1+2^{a+15}\cdot M_5
$$
for some odd integer $M_5$. So, in either case, namely when both $p_5$ and $p_6$ are prime factors of $N$, or when none of them is, we still infer that one of $2^{a+13}\cdot 3+1$ or $2^{a+13}\cdot 21+1$ is a prime factor of $N$. However, since $a\equiv 9\pmod {12}$, $a+13$ is not a multiple of $3$, so $2^{a+13}\cdot 3+1$ cannot be a prime factor of $N$, whereas since $a+13\equiv 2\pmod 4$, the number
$2^{a+13}\cdot 21+1$ is a multiple of $5$, so it cannot be a prime factor of $N$ either. This completes the analysis of the case $k=21$.

\subsection{$k=25$}

If $p$ is a Fermat number dividing $N$, then $p<25^2=625$, therefore $p\in \{3,5,17,257\}$. Clearly, $5\nmid 2^n\cdot 25+1$
for any $n\ge 0$, and one can check that $257\nmid 2^n\cdot 25+1$ for any $n\ge 0$.  Thus, $p\in \{3,17\}$.
We now write
$$
2^n \cdot 25+1=\prod_{i=1}^s (2^{a_i}+1)\prod_{i=1}^t (2^{b_i}\cdot 5+1)\prod_{i=1}^{u} (2^{c_i} \cdot 25+1),
$$
where $a_1<\cdots<a_s,~b_1<\cdots<b_t,~c_1<\cdots<c_u$. It is easy to see that $a_1,b_1,c_1$ cannot be all three distinct. Let
$a=\min\{a_1,b_1,c_1\}$. We do a case by case analysis according to the number $a$.

If $a=1$, then $2\cdot 25+1=51=3\times 17$ is not prime, so we must have that both $3$ and $11$ divide $2^n\cdot 25+1$.
If $3\mid 2^n\cdot 25+1$, then $n\equiv 1\pmod 2$, while if $11\mid 2^n\cdot 25+1$, then $n\equiv 7\pmod {10}$.
Thus, $33=2^5+1$ is a divisor of $N$. This implies that $b=\min\{a_2,b_2,c_2\}\le 5$.
Put $b=\min\{a_2,b_2,c_2\}$. Assume first that $b<5$. Then not all three $a_2,b_2,c_2$ are distinct. The case $b=2$ is not possible since $2^2+1=5$ is
not a divisor  of $N$ and  $2^2\cdot 5+1=21=3\times 7$ is not prime. The case $b=3$ is also not possible because $2^3\cdot 25+1=201=3\times 67$ is not prime.
In case $b=4$, since $2^4\cdot 5+1=81=3^4$ is not prime, the only possibility is that both $2^4+1=17$ and $2^4\cdot 25+1=401$.
However, $17\mid N$ implies that $n\equiv 1\pmod 8$, whereas $401\mid N$ implies that $n\equiv 4\pmod {200}$, and these congruences cannot be both satisfied.
Thus, $b=5$. However, this is not possible since none of $2^5\cdot 5+1=161=7\times 23$ and $2^5\cdot 25+1=801=3^2\times 89$ is prime.

Assume now that $a=2$. This is not possible because $2^2+1=5$ cannot divide $N$ and $2^2\cdot 5+1=21=3\times 7$ is not prime.

The case $a=3$ is not possible because $2^3\cdot 25+1=201=3\times 67$ is not prime.

Assume now that $a=4$. Since $2^4\cdot 5+1=81=3^4$, it follows that $N$ is divisible by both $2^4+1=17$ and $2^4\cdot 25+1=401$.
Again the condition $17\mid N$ implies that $n\equiv 1\pmod 8$, whereas $401\mid N$ implies that $n\equiv 4\pmod {200}$ and these two congruences cannot simultaneously hold.

{From} now on, $a\ge 5$, therefore both $2^a\cdot 5+1$ and $2^a\cdot 25+1$ are prime factors of $N$, which is false since one of these two numbers is always a multiple of $3$.

\medskip

{\bf Acknowledgment.} F.~L. thanks Jan-Hendrik Evertse for useful
advice and for providing some references. This work was done when
F.~L. and A.~P. visited the Mathematical Department and the ICMAT of
the UAM in Madrid, Spain, April 2012. These authors thank these
institutions for their hospitality and support. During the
preparation of this paper. F.~L. was also supported in part by
Project PAPIIT IN104512 and a Marcos Moshinsky Fellowship. J.~C. was
supported by Project MTM2011-22851 of the MICINN. A. P. was supported in part by project
Fondecyt No. 11100260.

\end{document}